\numberwithin{equation}{section}
\theoremstyle{plain}
\newtheorem{thm}{\hskip\parindent {Theorem}}[section]
\newtheorem{lem}{\hskip\parindent {Lemma}}[section]
\begin{document}

\title{Asymptotic symmetry of solutions for reaction-diffusion equations via elliptic geometry}

\keywords{parabolic equation, asymptotic symmetry, monotonicity, elliptic geometry}
\subjclass[2010]{35R11,35B07}	
\author{Baiyu Liu and Wenlong Yang}	
\maketitle
\begin{center}
School of Mathematics and Physics\\
University of Science and Technology Beijing \\
30 Xueyuan Road, Haidian District
Beijing, 100083\\
P.R. China
\end{center}

\begin{abstract}
In this paper, we investigate the asymptotic symmetry and monotonicity of positive solutions to a reaction-diffusion equation in the unit ball, utilizing techniques from elliptic geometry. Firstly, we discuss the properties of solutions in the elliptic space. Then, we establish crucial principles, including the asymptotic narrow region principle.Finally, we employ the method of moving planes to demonstrate the asymptotic symmetry of the solutions.
\end{abstract}

\maketitle

\section{Introduction}
In this paper, we investigate the asymptotic symmetry and monotonicity of positive solutions to the following reaction-diffusion equation
\begin{equation}\label{main_problem}
\left\{\begin{array}{ll}
\frac{\partial u}{\partial t}(x,t)-\Delta u(x,t)=f(|x|,u(x,t),t), &(x,t)\in B_1(0)\times(0,\infty),  \\
u(x,t)=0 & (x,t)\in \partial B_1(0)\times(0,\infty),
\end{array}\right .
\end{equation}
where $B_1(0)$ the unit ball in $\mathbb{R}^N$, $N\geqslant2$. 

Reaction-diffusion equations are fundamental mathematical models that describe how the concentration of substances evolves over time under the influence of both chemical reactions and diffusion. They are widely applied in fields such as biology for pattern formation, chemistry for reaction waves, ecology for population dynamics, neuroscience for modeling brain activity, and physics for phase transitions and material science \cite{zhang_yi_wu-2024, jia_wang-2023}. These equations provide critical insights into self-organization in complex systems and form a theoretical basis for understanding various spatial structures and dynamic behaviors in nature. In terms of the application to the asymptotic symmetry of solutions, they also have been used in the proofs of convergence results for some autonomous and time-periodic equations\cite{chen_polacik-1996}.

For elliptic equations, the method of moving planes, initially introduced by Alexandrov \cite{alexandrov-1962} and Serrin \cite{serrin-1971}, and later developed by Berestycki and Nirenberg \cite{Bre1991}, Gidas, Ni, and Nirenberg \cite{gidas_ni_nirenberg-1979}, Chen and Li \cite{ChenLi1997}, among others, is a powerful tool for investigating the symmetry and monotonicity of solutions. 
In \cite{gidas_ni_nirenberg-1979}, Gidas, Ni and Nirenberg proved that if for each $u\in (0,\infty)$, the function $r\mapsto f(r,u):(0,1)\to \mathbb{R} $ is non-increasing, then any positive $C^2(\bar B_R(0))$ solution of 
\begin{equation}\label{elliptic_problem}
\left\{\begin{array}{ll}
\Delta u+f(|x|,u)=0, & x\in B_R(0),  \\
u=0, & x\in\partial B_R(0), 
\end{array}\right.
\end{equation}
is radially symmetric and decreasing in $r$. 
In recent years, several systematic approaches have emerged for studying symmetry and monotonicity in both local and nonlocal elliptic equations. These include the method of moving planes in integral form \cite{Chen2016, Dou2019, Guo2019, Guo2008}, the direct method of moving planes \cite{Bre1988,busca_sirakov-2000, Chen2017, LiuXu, Liu2018, LiNi1993}, the method of moving spheres \cite{ChLZh2017, Dou2017, Jin2011, LiZh1995, WeiXu1999}, and sliding methods \cite{Bre1991, LiuZ2021, Wu2020}. For further details on these methods, we refer to \cite{Chenbook} \cite{Chenbook2} \cite{CaoDai} \cite{Chen2015} and the references therein.

Notable, using hyperbolic geometry combine with the sprite of the moving plane method, a completely new monotonicity result can be achieved. Under the condition that $(1-r^2)^{(N+2)/2}f(r,(1-r^2)^{-(N-2)/2}u)$ is decreasing in $r\in(0,1)$ and fixed $u\in(0,\infty)$, Naito, Nishimoto and Suzuki have sequentially achieved each positive solution of \eqref{elliptic_problem} is radially symmetric and $(1-r^2)^{-(N-2)/2}u$ is decreasing in $r\in(0,1)$ in the cases of $N=2$ \cite{naito_nishimoto_suzuki-1996} and $N\geqslant2$ \cite{naito_suzuki-1998} . In \cite{shioji_watanabe-2012}, using not only hyperbolic geometry but also elliptic geometry, Shioji and Watanabe established the symmetry and monotonicity properties of a wide class of strong solutions of \eqref{elliptic_problem}.

As to the parabolic equations, the situation becomes significantly more intricate. There have been some preliminary study of symmetric solutions of periodic-parabolic problem \cite{babin_sell-2000,dancer_hess-1994,foldes-2011} and further results concerning entire solutions where time variable $t\in\mathbb{R}$\cite{babin-1994,polacik-2006}. In a different type of symmetry considering the Cauchy-Dirichlet problem of reaction-diffusion equations, asymptotic symmetry shows a tendency of positive solutions to improve their symmetry as time variable $t\in(0,\infty)$ increases, becoming "symmetric and monotone in the limit" as $t\to\infty$. In this context, Li\cite{li-1989} obtained symmetry for positive solutions with symmetric initial solutions. Without that, in bounded, symmetric and strictly convex domain $\Omega$, Hess and Poláčik \cite{hess_polacik-1994} showed the asymptotic symmetry and monotonicity of positive solutions to the following problem  
\begin{equation}
\left\{\begin{array}{ll}
\partial_t u=\Delta u(x,t)+f(u(x,t),t), & (x,t)\in\Omega\times(0,\infty),  \\
u(x,t)=0, & (x,t)\in\partial\Omega\times(0,\infty).
\end{array}\right.
\end{equation}
 It was assumed in \cite{hess_polacik-1994} that $f$ is uniformly Lipschitz-continuous in $u$ and Hölder-continuous of exponent $(\alpha,(\alpha/2))$ with respect to $(u,t)$. Subsequently, in both bounded and unbounded domains, Poláčik \cite{polacik-2007,polacik-2005} extended the result to the following generalized fully nonlinear parabolic equation
\begin{equation}\label{generaliazed_parabolic_problem}
\left\{\begin{array}{ll}
u_t=F(t,x,u,Du,D^{2}u), & (x,t)\in\Omega\times(0,\infty), \\
u(x,t)=0 & (x,t)\in\partial\Omega\times(0,\infty).
\end{array}\right.
\end{equation}
In an independent work, Babin and Sell \cite{babin_sell-1995} gave a similar result. With the symmetry conclusion of entire solutions of \eqref{generaliazed_parabolic_problem}, Poláčik \cite{polacik-2006} gave a survey that summarized the following limitations of existing asymptotic symmetry results: the regularity requirements of the time-dependence of the nonlinearities and domain, the compactness requirements of spatial derivatives and the strong positivity requirements on the nonlinearities in the case of nonsmooth domains\cite{polacik-2009}. Moreover, Saldaña and Weth \cite{saldana_weth-2012} established the asymptotic foliated Schwarz symmetry which indicates that all positive solutions of \eqref{main_problem} become axially symmetric with respect to a common axis passing through the origin as $t\to\infty$.

The motivation for this paper is to extend the results of \cite{naito_nishimoto_suzuki-1996, naito_suzuki-1998, shioji_watanabe-2012} to the framework of reaction-diffusion equations. Our approach, which integrates elliptic geometry with the method of moving planes, is inspired by the work of \cite{chen_wang_niu-2021} and \cite{shioji_watanabe-2012}.

In order to clarify the theorem, we first introduce the $\omega$-limit-set of $u$:
\begin{equation}\label{omegau_definition}
\omega(u):=\left\{\varphi\in C_0(\overline{B_1(0)})\,|\,\exists \,t_k\to\infty \,\,\,such \,\,that\,\, \varphi=\lim_{k\to\infty}u(\cdot,t_k)\right\}.
\end{equation}
By the discussion of $u$ in Appendix, the orbit $\{u(\cdot,t):t>0\}$ is relatively compact in $C_0(\overline{B_1(0)})$ and that $\omega(u)$ is a nonempty compact subset of $C_0(\overline{B_1(0)})$.

From now on, we will assume the nonlinearity $f:(0,1)\times(0,\infty)\times[0,\infty)\to\mathbb{R}$ satisfy
\begin{enumerate}
	\item[(F1)]
	for each $M>0$, $f(r,u,t)$ is Lipschitz-continuous in $u$ uniformly with respect to $t$ and $r$ in the region $(0,1)\times[-M,M]\times[0,\infty)$;
	\item[(F2)]
	for each $r\in (0,1)$ and $\tau>0$ there exist a constant $H$ and a small constant $\varepsilon_0$ both independent of $\tau$ such that
	\begin{equation*}
		|f(r_1,u,t_1)-f(r_2,u,t_2)|\leqslant H(|r_1-r_2|^\alpha+|t_1-t_2|^{\frac{\alpha}{2}}),
	\end{equation*}
	for all $u\in(0,\infty)$ and $(r_1,t_1),\,(r_2,t_2)\in (0,1)\times[\tau-\varepsilon_0,\tau+\varepsilon_0]$;
	\item[(F3)]
	for all $t\in[0,\infty)$ and each fixed $u\in(0,\infty)$, $(1+r^2)^{\frac{N+2}{2}}f(r,(1+r^2)^{-\frac{N-2}{2}}u,t)$ is decreasing in $r\in (0,1)$.
\end{enumerate}

\begin{thm}\label{main_theorem} 
Let $f$ satisfy conditions (F1), (F2) and (F3).
Assume $u\in C^{2,1}(B_1(0)\times(0,\infty))\cap C(\overline{B_1(0)}\times[0,\infty))$ is a positive bounded solution of \eqref{main_problem} that satisfies $\partial u/\partial t$ is non-negative and bounded, $\nabla u(x,t)$ is bounded for all $(x,t)\in B_1(0)\times(0,\infty)$. 
Then for each $\varphi(x)\in\omega(u)$, the following holds:
\begin{itemize}
	\item Either
	$\varphi(x)\equiv 0$;
	\item Or
	$\varphi(x)$ is radially symmetric about the origin and satisfies $\partial_{r}\left((1+|r|^2)^{\frac{N-2}{2}}\varphi(r)\right)<0$ for $r=|x|\in(0,1)$, where $x\in B_1(0)$.
\end{itemize}
\end{thm}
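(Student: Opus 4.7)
The plan is to fix $\varphi\in\omega(u)$ with $\varphi\not\equiv 0$ and, for an arbitrary unit vector $e\in S^{N-1}$, establish symmetry of $\varphi$ about the hyperplane $\{x\cdot e=0\}$ by performing an \emph{asymptotic} moving-planes argument in the elliptic (spherical) conformal geometry. Once this is achieved for every $e\in S^{N-1}$, radial symmetry about the origin follows, and the strict monotonicity $\partial_{r}\left((1+r^{2})^{(N-2)/2}\varphi\right)<0$ will come from a final application of the strong maximum principle to the transformed radial function.

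The first step is the conformal change of unknown motivated by (F3) and by \cite{shioji_watanabe-2012}: set
\[
v(x,t):=(1+|x|^{2})^{(N-2)/2}u(x,t),
\]
so that $u=(1+|x|^{2})^{-(N-2)/2}v$. This recasts \eqref{main_problem} as a conformally weighted parabolic equation on $B_{1}(0)\times(0,\infty)$ whose spatial operator is the (Euclidean pullback of the) spherical Laplacian and whose nonlinearity is
\[
F(r,v,t):=(1+r^{2})^{(N+2)/2}f\!\left(r,(1+r^{2})^{-(N-2)/2}v,t\right),
\]
which by (F3) is nonincreasing in $r$. The hypotheses (F1)--(F2) transfer to $F$ and ensure that parabolic comparison and Hopf-type principles apply in this new framework.

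Next I would run moving planes in elliptic geometry. For $\lambda\in(0,1)$ and $e\in S^{N-1}$, let $x^{\lambda}$ denote the conformal reflection associated with the great sphere $\{x\cdot e=\lambda\}$ under stereographic projection --- that is, the M\"obius map corresponding, on $S^{N}$, to the Euclidean reflection fixing that plane. Set $\Sigma_{\lambda}:=\{x\in B_{1}(0):x\cdot e>\lambda\}$ and $w_{\lambda}(x,t):=v(x^{\lambda},t)-v(x,t)$; a direct calculation produces a linear parabolic inequality for $w_{\lambda}$ whose zeroth-order coefficient is uniformly bounded by (F1) and whose source term has the sign dictated by the monotonicity of $F(\cdot,v,t)$ in $r$. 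Taking $\lambda$ close to $1$ makes $\Sigma_{\lambda}$ a narrow cap, and the asymptotic narrow region principle already proved in the paper gives $\liminf_{t\to\infty}w_{\lambda}\geq 0$ on $\Sigma_{\lambda}$. I would then decrease $\lambda$, using the assumption $\partial_{t}u\geq 0$ together with an asymptotic strong maximum principle and Hopf lemma to rule out a first critical value $\lambda^{*}>0$, and push $\lambda$ down to $0$. Selecting sequences $t_{k}\to\infty$ realising $\varphi$ yields symmetry of the transform of $\varphi$ about $\{x\cdot e=0\}$, and the same argument with $-e$ in place of $e$ then pins down symmetry, whence (by varying $e$) radial symmetry.

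The main obstacle will be the elliptic-geometry bookkeeping: the reflection $x\mapsto x^{\lambda}$ is a M\"obius transformation rather than a Euclidean one, so its Jacobian and the conformal factor must be carried carefully through the narrow-region estimates, and the asymptotic narrow region principle has to be invoked on the transformed equation rather than on $u$ itself. A closely related delicate point is the exclusion of a critical stop $\lambda^{*}>0$: since the available control is only at $t=\infty$, the contradiction must be produced by passing to an entire-in-time limit profile through $\varphi$, running the strong parabolic maximum principle and the Hopf lemma on its time-slices, and transferring the conclusion back under the conformal change of variable --- essentially the asymptotic analogue of the elliptic argument of \cite{shioji_watanabe-2012}, and this is where I expect the bulk of the technical work to lie.
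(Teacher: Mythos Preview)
Your overall strategy matches the paper's: conformal change $v=(1+|x|^{2})^{(N-2)/2}u$, moving planes in the spherical geometry, asymptotic narrow region principle to start, and a strong maximum principle/Hopf argument on limit profiles to push $\lambda$ to $0$. Two points, however, need repair.

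\medskip
\textbf{The geometry of $\Sigma_\lambda$.} You write $\Sigma_{\lambda}=\{x\in B_1(0):x\cdot e>\lambda\}$ while simultaneously declaring $x\mapsto x^{\lambda}$ to be a M\"obius (sphere-inversion) map. These are incompatible: the fixed set of a M\"obius reflection is a Euclidean sphere, not a hyperplane, so the reflection does not carry $\{x\cdot e>\lambda\}$ into its complement inside $B_1(0)$, and the comparison $w_\lambda(x,t)=v(x^\lambda,t)-v(x,t)$ is not well posed on that set. In the paper the totally geodesic plane through $(\lambda,0,\dots,0)$ is the Euclidean sphere
\[
T_\lambda=\Bigl\{x:\ |x-e_\lambda|=\tfrac{1+\lambda^2}{2\lambda}\Bigr\},\qquad e_\lambda=\Bigl(-\tfrac{1-\lambda^2}{2\lambda},0,\dots,0\Bigr),
\]
$\Sigma_\lambda$ is the region in $B_1(0)$ \emph{outside} this sphere, and $x^\lambda$ is inversion in $T_\lambda$. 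The narrow region principle is then stated for an annular shell $\{(1+\lambda^2)/(2\lambda)<|x-e_\lambda|<(1+\lambda^2)/(2\lambda)+\delta\}$, not a Euclidean slab. A related detail: the paper does not run the argument on $w_\lambda$ directly but on $z_\lambda=(1+|x|^2)^{-(N-2)/2}w_\lambda$, which satisfies $\partial_t z_\lambda-\Delta z_\lambda\ge c_\lambda z_\lambda$ with no drift term; this is what makes the narrow region principle clean.

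\medskip
\textbf{Ruling out a critical $\lambda_0>0$.} You say you will ``rule out a first critical value $\lambda^*>0$'' via the strong maximum principle on a limit profile, but the mechanism that actually closes the argument is missing from your plan. In the paper, $\lambda_0$ is defined by a condition that must hold \emph{for every} $\varphi\in\omega(u)$ simultaneously. At $\lambda_0$, the strong maximum principle on the limiting equation gives $\psi_{\lambda_0}>0$ on $\Sigma_{\lambda_0}$ for each fixed $\varphi$; the essential extra step is to use the compactness of $\omega(u)$ in $C_0(\overline{B_1(0)})$ to upgrade this to a \emph{uniform} lower bound $\psi_{\lambda_0}\ge C_0>0$ on a slightly shrunk region, independent of $\varphi$. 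Only with this uniformity can one choose a single $\varepsilon>0$ so that $\psi_\lambda\ge 0$ persists for all $\lambda\in(\lambda_0-\varepsilon,\lambda_0)$ and all $\varphi\in\omega(u)$, and then apply the narrow region principle on the leftover thin shell. Without the uniform-in-$\omega(u)$ step, the contradiction with the definition of $\lambda_0$ does not follow.
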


The structure of the paper is as follows: Section \ref{sec:pre} presents the preliminary results that form the foundation for our main findings. In Section \ref{sec:maxp} , we introduce the asymptotic narrow region principle, which plays a crucial role in the proof of our main theorem.  The proof of Theorem \ref{main_theorem} is provided in detail in Section \ref{sec:proof}. Finally, the appendix includes additional properties of the solutions.

\section{Preliminaries}\label{sec:pre}

In this section, we will establish some essential preliminaries for applying the moving plane method in the space $(B_1(0), g)$, where the metric tensor $g$ is defined by
\begin{equation}
\label{def:2.1}
\frac{4|dx|^2}{(1+|x|^2)^2}.
\end{equation}
Here, $|\cdot|$ denotes the standard Euclidean norm, consistent with the notation used in other sections. For each $\lambda \in (0,1)$, let $T_\lambda \subset B_1(0)$ be a totally geodesic plane intersecting the $x_1$-axis orthogonally at the point $(\lambda, 0, \dots, 0)$. It follows that
\begin{equation}\label{Tlambda_definition}
T_\lambda=\left\{x\in B_1(0):|x-e_\lambda|=\left(\frac{1+\lambda^2}{2\lambda}\right)\right\},
\end{equation}
where
\begin{equation}\label{elambda_definition}
e_\lambda=\left(\left(\frac{1-\lambda^2}{-2\lambda}\right)\,,0\,,\cdots,0\right).
\end{equation}
Define
\begin{equation}\label{sigma_definition}
\Sigma_\lambda=\left\{x\in B_1(0)\ \vline\  |x-e_\lambda|>\left(\frac{1+\lambda^2}{2\lambda}\right)\right
\}.
\end{equation}
For each $x \in \Sigma_\lambda$, let $x^\lambda$ denote the reflection of $x$ with respect to $T_\lambda$ in the space $(B_1(0), g)$. This reflection can be expressed as
\begin{equation}\label{xlambda_definition}
x^\lambda=e_\lambda+\left(\frac{1+\lambda^2}{2\lambda}\right)^{2}\frac{x-e_\lambda}{|x-e_\lambda|^2}.
\end{equation}
We remark that 
\begin{equation}\label{xlambda_distance}
|x|^2>|x^\lambda|^2.
\end{equation}
The proof is presented in Lemma \ref{proof_of_xlambda_distance}.

The Laplace-Beltrami operator $\Delta_{(g,x)}$ on the space $(B_1(0),g)$ at $x\in B_1(0)$ is defined by
\begin{equation}\label{operatordefinition}
\Delta_{(g,x)}=\left(\frac{1+|x|^2}{2}\right)^2\left(\Delta-\frac{2(N-2)}{1+|x|^2}\sum_{i=1}^{N}x_i\frac{\partial}{\partial x_i}\right),
\end{equation}
where $\Delta=\sum_{i=1}^{N}\frac{\partial^2}{\partial x_i^2}$.

Let $u(x,t)$ be a solution to the parabolic problem given by equation \eqref{main_problem}.  For each $\lambda\in(0,1)$, we introduce new functions  $v(x,t)$, $w_\lambda(x,t)$ and $z_\lambda(x,t)$ to compare the value of $u(x,t)$ with $u(x^\lambda,t)$ and to simplify the analysis of the gradient's impact. These functions are defined as follows:
\begin{eqnarray}
\label{v_definition}&v(x,t)=(1+|x|^2)^\frac{N-2}{2}u(x,t),\qquad (x,t)\in B_1(0)\times(0,\infty),\\ \label{wlambda_definition}&w_\lambda(x,t)=v(x^\lambda,t)-v(x,t),\qquad (x,t)\in\Sigma_\lambda\times(0,\infty),\\ \label{zlambda_definition}&z_\lambda(x,t)=(1+|x|^2)^{-\frac{N-2}{2}}w_\lambda(x,t),\qquad (x,t)\in\Sigma_\lambda\times(0,\infty).
\end{eqnarray}
From \eqref{v_definition}-\eqref{zlambda_definition}, for $(x,t)\in\Sigma_\lambda\times(0,\infty)$, we obtain 
\begin{equation}\label{zlambda_ulambda_u}
\begin{aligned}
z_\lambda(x,t) &= (1+|x|^2)^{-\frac{N-2}{2}}((1+|x^\lambda|^2)^\frac{N-2}{2}u(x^\lambda,t)-(1+|x|^2)^\frac{N-2}{2}u(x,t)) \\ &=\left(\frac{1+|x^\lambda|^2}{1+|x|^2}\right)^\frac{N-2}{2}u(x^\lambda,t)-u(x,t).
\end{aligned}
\end{equation}
Clearly, $z_\lambda\in(C^{2,1}(\Sigma_\lambda\times(0,\infty))\cap C(\overline{\Sigma_\lambda}\times[0,\infty)])$ and $z_\lambda=0$ on $T_\lambda$.
For each $\varphi(x)\in\omega(u)$, denote
\begin{equation}\label{psilambda_definition}
\begin{aligned}
\psi_\lambda(x) &= (1+|x|^2)^{-\frac{N-2}{2}}((1+|x^\lambda|^2)^\frac{N-2}{2}\varphi(x^\lambda)-(1+|x|^2)^\frac{N-2}{2}\varphi(x)) \\ &=\left(\frac{1+|x^\lambda|^2}{1+|x|^2}\right)^\frac{N-2}{2}\varphi(x^\lambda)-\varphi(x),
\end{aligned}    
\end{equation}
which is an $\omega$-limit of $z_\lambda(x,t)$.

By virtue of the definitions given in (\ref{operatordefinition}) and \eqref{v_definition}, we observe that for $x\in B_1(0)$, the function $v$ satisfies
\begin{equation}\label{v_equation}
\begin{aligned}
\Delta_{(g,x)}v(x,t) &= \left(\frac{1+|x|^2}{2}\right)^2\left(\Delta v(x,t)-\frac{2(N-2)}{1+|x|^2}\sum_{i=1}^{N}x_i\frac{\partial v}{\partial x_i}\right) \\ &= \frac{(1+|x|^2)^{\frac{N+2}{2}}}{4}\Delta u(x,t)+\frac{N(N-2)}{4}(1+|x|^2)^{\frac{N-2}{2}}u(x,t) 
\\ &= \frac{(1+|x|^2)^{\frac{N+2}{2}}}{4}\left(\frac{\partial u}{\partial t}-f(|x|,u(x,t),t)\right)+\frac{N(N-2)}{4}v(x,t) \\ &= \left(\frac{1+|x|^2}{2}\right)^{2}\frac{\partial v}{\partial t}-\frac{(1+|x|^2)^{\frac{N+2}{2}}}{4}f(|x|,(1+|x|^2)^{-\frac{N-2}{2}}v(x,t),t)+\frac{N(N-2)}{4}v(x,t).
\end{aligned}
\end{equation}

In addition to the above notations, we now present the following lemmas to establish the properties of 
$z_\lambda(x,t)$.
\begin{lem}\label{lemma_zlambda_inequality}
Let $u\in C^{2,1}(B_1(0)\times(0,\infty))\cap C(\overline{B_1(0)}\times[0,\infty))$ be a positive bounded solution of \eqref{main_problem} that satisfies $\frac{\partial u}{\partial t}(x,t)\geqslant 0$ for all $(x,t)\in B_1(0)\times(0,\infty)$.
Assume that the function $(1+r^2)^{\frac{N+2}{2}}f(r,(1+r^2)^{-\frac{N-2}{2}}s,t)$ is nonincreasing in $r\in(0,1)$ for each fixed $s\in(0,\infty)$ and $t\in(0,\infty)$. Under these conditions, $z_\lambda$ satisfies
\begin{equation}\label{zlambda_inequality}
\frac{\partial z_\lambda}{\partial t}-\Delta z_\lambda \geqslant c_\lambda(x,t)z_\lambda
\end{equation}
in $\Sigma_\lambda\times(0,\infty)$, where
\begin{equation}
c_\lambda(x,t)=\frac{f(|x|,(1+|x|^2)^{-\frac{N-2}{2}}v(x^\lambda,t),t)-f(|x|,(1+|x|^2)^{-\frac{N-2}{2}}v(x,t),t)}{(1+|x|^2)^{-\frac{N-2}{2}}v(x^\lambda,t)-(1+|x|^2)^{-\frac{N-2}{2}}v(x,t)}.
\end{equation}
\end{lem}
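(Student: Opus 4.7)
The plan is to exploit two features of the metric $g$: the reflection $\phi_\lambda\colon x\mapsto x^\lambda$ is an isometry of $(B_1(0),g)$, and the substitution $v=(1+|x|^2)^{(N-2)/2}u$ encodes precisely the conformal change that produces \eqref{v_equation}. I would begin by writing $\Delta_{(g,x)}[v(x^\lambda,t)]=(\Delta_g v)(x^\lambda,t)$, which is exactly the isometry invariance of $\Delta_g$ under $\phi_\lambda$, and then applying \eqref{v_equation} at the point $x^\lambda$ in place of $x$. Subtracting \eqref{v_equation} at $x$ yields
\[
\Delta_{(g,x)}w_\lambda=\Bigl(\tfrac{1+|x^\lambda|^2}{2}\Bigr)^{2}\partial_t v(x^\lambda,t)-\Bigl(\tfrac{1+|x|^2}{2}\Bigr)^{2}\partial_t v(x,t)-\tfrac{1}{4}\bigl(\mathcal{F}(x^\lambda,t)-\mathcal{F}(x,t)\bigr)+\tfrac{N(N-2)}{4}w_\lambda,
\]
where $\mathcal{F}(y,t):=(1+|y|^2)^{(N+2)/2}f(|y|,u(y,t),t)$.

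Next I would bound the two differences on the right separately. For the time-derivative piece, $\partial_t u\geq 0$ yields $\partial_t v\geq 0$, and combined with the inequality $(1+|x^\lambda|^2)^{2}\leq(1+|x|^2)^{2}$ from \eqref{xlambda_distance} this gives the upper bound $\bigl(\tfrac{1+|x|^2}{2}\bigr)^{2}\partial_t w_\lambda$. For the $f$-piece, the monotonicity hypothesis applied with $s=v(x^\lambda,t)$ and the relation $|x^\lambda|<|x|$ replaces $\mathcal{F}(x^\lambda,t)$ from below by $(1+|x|^2)^{(N+2)/2}f(|x|,(1+|x|^2)^{-(N-2)/2}v(x^\lambda,t),t)$; after reorganising and invoking the definition of $c_\lambda$, the expression $-\tfrac{1}{4}(\mathcal{F}(x^\lambda,t)-\mathcal{F}(x,t))$ is bounded above by $-\tfrac{(1+|x|^2)^{2}}{4}c_\lambda(x,t)\,w_\lambda$. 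Combining, I obtain
\[
\Delta_{(g,x)}w_\lambda\leq\Bigl(\tfrac{1+|x|^2}{2}\Bigr)^{2}\bigl(\partial_t w_\lambda-c_\lambda w_\lambda\bigr)+\tfrac{N(N-2)}{4}w_\lambda.
\]

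Finally, translating back to the Euclidean Laplacian is immediate: the middle line of \eqref{v_equation} is a purely algebraic identity between $\Delta$ and $\Delta_g$ whenever two functions are linked by multiplication by $(1+|x|^2)^{(N-2)/2}$, so applied to the pair $(z_\lambda,w_\lambda)$ in place of $(u,v)$ it gives $\Delta_{(g,x)}w_\lambda=\tfrac{(1+|x|^2)^{(N+2)/2}}{4}\Delta z_\lambda+\tfrac{N(N-2)}{4}w_\lambda$. Substituting this into the preceding inequality collapses the conformal factors exactly and leaves $\partial_t z_\lambda-\Delta z_\lambda\geq c_\lambda z_\lambda$, which is the claim. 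The only technical point to watch is the alignment of the $(1+|x|^2)^{(N\pm 2)/2}$ weights so they cancel cleanly in the final display; the isometry invariance of $\phi_\lambda$ and the two monotonicity inputs (on $f$ and on $\partial_t u$) do all of the real work.
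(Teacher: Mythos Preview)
Your proposal is correct and follows essentially the same approach as the paper: subtract \eqref{v_equation} at $x^\lambda$ (via the isometry invariance of $\Delta_g$) from \eqref{v_equation} at $x$, estimate the time-derivative term using $\partial_t v\ge 0$ together with $|x^\lambda|<|x|$, estimate the $f$-term using the monotonicity hypothesis, and then unwind the conformal weight to pass from $w_\lambda$ to $z_\lambda$. Your final conversion via the identity $\Delta_{(g,x)}w_\lambda=\tfrac{(1+|x|^2)^{(N+2)/2}}{4}\Delta z_\lambda+\tfrac{N(N-2)}{4}w_\lambda$ is a slightly more streamlined packaging of the same direct computation the paper carries out.
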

\begin{proof}
Let $\lambda\in(0,1)$ , $x\in\Sigma_\lambda$ and set $y=x^\lambda$. Since the Laplace-Beltrami operator is invariant under the isometry, as shown in Lemma \ref{proof_of_operater_identity} in the Appendix, we have 
\begin{equation}\label{operator_identity}
\Delta_{(g,y)}v(y,t)=\Delta_{(g,x)}v(x^\lambda,t).
\end{equation}
Using this equality, together with \eqref{xlambda_distance} and the monotonicity assumption (F3) of $(1+r^2)^{\frac{N+2}{2}}f(r,(1+r^2)^{-\frac{N-2}{2}}s,t)$, we deduce that
{\setlength\arraycolsep{2pt}
\begin{eqnarray*}
0 &= & \Delta_{(g,y)}v(y,t)-\frac{N(N-2)}{4}v(y,t)+\frac{(1+|y|^2)^{\frac{N+2}{2}}}{4}f(|y|,(1+|y|^2)^{-\frac{N-2}{2}}v(y,t),t)-\left(\frac{1+|y|^2}{2}\right)^{2}\frac{\partial v(y,t)}{\partial t} \\ && -\Delta_{(g,x)}v(x,t)+\frac{N(N-2)}{4}v(x,t)-\frac{(1+|x|^2)^{\frac{N+2}{2}}}{4}f(|x|,(1+|x|^2)^{-\frac{N-2}{2}}v(x,t),t)+\left(\frac{1+|x|^2}{2}\right)^{2}\frac{\partial v(x,t)}{\partial t} \\ &=& \Delta_{(g,x)}w_\lambda(x,t)-\frac{N(N-2)}{4}w_\lambda(x,t) \\ && +\frac{(1+|x^\lambda|^2)^{\frac{N+2}{2}}}{4}f(|x^\lambda|,(1+|x^\lambda|^2)^{-\frac{N-2}{2}}v(x^\lambda,t),t)-\frac{(1+|x|^2)^{\frac{N+2}{2}}}{4}f(|x|,(1+|x|^2)^{-\frac{N-2}{2}}v(x,t),t) \\ && -\left(\frac{1+|x|^2}{2}\right)^{2}\frac{\partial w_\lambda(x,t)}{\partial t} +\frac{\partial v(y,t)}{\partial t}\left(\left(\frac{1+|x|^2}{2}\right)-\left(\frac{1+|x^\lambda|^2}{2}\right)\right) \\ &\geqslant& \Delta_{(g,x)}w_\lambda(x,t)-\frac{N(N-2)}{4}w_\lambda(x,t)-\left(\frac{1+|x|^2}{2}\right)^{2}\frac{\partial w_\lambda(x,t)}{\partial t} \\ && +\frac{(1+|x|^2)^{\frac{N+2}{2}}}{4}f(|x|,(1+|x|^2)^{-\frac{N-2}{2}}v(x^\lambda,t),t)-\frac{(1+|x|^2)^{\frac{N+2}{2}}}{4}f(|x|,(1+|x|^2)^{-\frac{N-2}{2}}v(x,t),t) \\ &=& \Delta_{(g,x)}w_\lambda(x,t)-\frac{N(N-2)}{4}w_\lambda(x,t)+\left(\frac{1+|x|^2}{2}\right)^{2}c_\lambda(x,t)w_\lambda(x,t)-\left(\frac{1+|x|^2}{2}\right)^{2}\frac{\partial w_\lambda(x,t)}{\partial t} \\ &=& \left(\frac{1+|x|^2}{2}\right)^{2}\left(-\frac{\partial w_\lambda(x,t)}{\partial t}+\Delta w_\lambda -\frac{2(N-2)}{1+|x|^2}\sum_{i=1}^{N}x_i\frac{\partial w_\lambda}{\partial x_i}-\frac{N(N-2)}{(1+|x|^2)^2}w_\lambda(x,t)+c_\lambda(x,t)w_\lambda(x,t)\right).
\end{eqnarray*}}
Thus we obtain
\begin{equation}
\frac{\partial w_\lambda(x,t)}{\partial t}-\Delta w_\lambda+\frac{2(N-2)}{1+|x|^2}\sum_{i=1}^{N}x_i\frac{\partial w_\lambda}{\partial x_i}+\frac{N(N-2)}{(1+|x|^2)^2}w_\lambda(x,t)\geqslant c_\lambda(x,t)w_\lambda(x,t).
\end{equation}
From \eqref{zlambda_definition}, an elementary computation shows that
\begin{eqnarray*}
\frac{\partial w_\lambda(x,t)}{\partial t} &=& (1+|x|^2)^{\frac{N-2}{2}}\frac{\partial z_\lambda}{\partial t}, \\ \Delta z_\lambda(x,t) &=& (\Delta(1+|x|^2)^{-\frac{N-2}{2}})w_\lambda+2\nabla(1+|x|^2)^{-\frac{N-2}{2}})\cdot\nabla w_\lambda+(1+|x|^2)^{-\frac{N-2}{2}})\Delta w_\lambda \\ &=& (1+|x|^2)^{-\frac{N-2}{2}}\left(\Delta w_\lambda-\frac{2(N-2)}{1+|x|^2}\sum_{i=1}^{N}x_i\frac{\partial w_\lambda}{\partial x_i}-\frac{N(N-2)}{(1+|x|^2)^2}w_\lambda(x,t)\right).
\end{eqnarray*}
Therefore, $z_\lambda$, as defined ub \eqref{zlambda_definition}, satisfies the inequality given in \eqref{zlambda_inequality}.
\end{proof}

\begin{lem}\label{lemma_unit_outer_normal}
Assume that for some $(x_0,t_0)\in T_\lambda\times(0,\infty)$, there holds
\begin{equation*}
\frac{\partial z_\lambda}{\partial n}(x_0,t_0)<0,
\end{equation*}
where $n$ denotes the unit outer normal to $\partial\Sigma_\lambda$. Then,
\begin{equation}\label{v_unit_outer_normal}
\frac{\partial v}{\partial n}(x_0,t_0)>0.
\end{equation}
\end{lem}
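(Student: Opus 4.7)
The plan is to exploit that $z_\lambda$ vanishes identically on $T_\lambda$ (since $x^\lambda=x$ there, so $w_\lambda=0$), and then relate $\partial w_\lambda/\partial n$ to $\partial v/\partial n$ via the Jacobian of the reflection map $\phi\colon x\mapsto x^\lambda$ at the fixed point $x_0$.

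First, applying the product rule to $z_\lambda=(1+|x|^2)^{-(N-2)/2}w_\lambda$ and using $w_\lambda(x_0,t_0)=0$, I would obtain
\begin{equation*}
\frac{\partial z_\lambda}{\partial n}(x_0,t_0)=(1+|x_0|^2)^{-\frac{N-2}{2}}\frac{\partial w_\lambda}{\partial n}(x_0,t_0),
\end{equation*}
so the hypothesis reduces to showing that $\partial w_\lambda/\partial n(x_0,t_0)<0$ implies $\partial v/\partial n(x_0,t_0)>0$. Since the outer normal $n$ to $\partial\Sigma_\lambda$ at $x_0\in T_\lambda$ points into the ball bounded by $T_\lambda$, I would write it explicitly as $n=-(x_0-e_\lambda)/|x_0-e_\lambda|$.

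Next, I would compute the differential $D\phi|_{x_0}$ directly from \eqref{xlambda_definition}. Writing $u=x-e_\lambda$ and $r_\lambda=(1+\lambda^2)/(2\lambda)$, one finds
\begin{equation*}
\partial_i\phi^j(x)=\frac{r_\lambda^2}{|u|^2}\left(\delta_{ij}-\frac{2u^iu^j}{|u|^2}\right),
\end{equation*}
so that on $T_\lambda$ (where $|u|=r_\lambda$) the Jacobian is the symmetric matrix $I-2\hat u_0\hat u_0^{\top}$ with $\hat u_0=(x_0-e_\lambda)/|x_0-e_\lambda|$. This is precisely the Euclidean reflection across the tangent plane to $T_\lambda$ at $x_0$, and in particular $D\phi|_{x_0}\,n=-n$.

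With this in hand, the chain rule applied to $w_\lambda(x,t)=v(\phi(x),t)-v(x,t)$ at $x_0$, together with $\phi(x_0)=x_0$, gives
\begin{equation*}
\nabla w_\lambda(x_0,t_0)=\bigl(D\phi|_{x_0}-I\bigr)\nabla v(x_0,t_0)=-2\bigl(\hat u_0\cdot\nabla v(x_0,t_0)\bigr)\hat u_0.
\end{equation*}
Taking the inner product with $n=-\hat u_0$ yields the clean identity
\begin{equation*}
\frac{\partial w_\lambda}{\partial n}(x_0,t_0)=-2\,\frac{\partial v}{\partial n}(x_0,t_0),
\end{equation*}
and combining with the first step and the assumption $\partial z_\lambda/\partial n(x_0,t_0)<0$ produces $\partial v/\partial n(x_0,t_0)>0$, as desired. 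There is no real obstacle here; the only point worth verifying carefully is step two, namely that the elliptic reflection $\phi$ is conformal and that its differential at a fixed point of $T_\lambda$ is the Euclidean reflection across the tangent hyperplane, from which the sign of $D\phi|_{x_0}n$ relative to $n$ follows automatically.
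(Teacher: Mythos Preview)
Your proof is correct and follows essentially the same approach as the paper: both reduce to the identity $\partial w_\lambda/\partial n(x_0,t_0)=-2\,\partial v/\partial n(x_0,t_0)$ at a point of $T_\lambda$, obtained from the fact that the reflection $x\mapsto x^\lambda$ reverses the normal direction there. The only cosmetic difference is that the paper establishes this by parametrizing the normal line explicitly and taking limits of difference quotients (showing $x_p^\lambda=x_q$ with $q/p\to 1$), whereas you compute the full Jacobian $D\phi|_{x_0}=I-2\hat u_0\hat u_0^{\top}$ directly; your route is a bit cleaner but the content is the same.
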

\begin{proof}
Since $z_\lambda(x,t)=0$ on $T_\lambda\times(0,\infty)$, we have 
\begin{equation}\label{wlambda_zlambda_unit_outer_normal_comparison}
\frac{\partial w_\lambda}{\partial n}(x_0,t_0)=(1+|x_0|^2)^{\frac{N-2}{2}}\frac{\partial z_\lambda}{\partial n}(x_0,t_0)<0.
\end{equation}
We define $x_p$ as $x_p=x_0-pn(x_0)$, $p>0$, and $n(x_0)$ is the  unit outer normal to $\partial\Sigma_\lambda$ at point $x_0$. Specifically, $n(x_0)=-(x_0-e_\lambda)/|x_0-e_\lambda|$. 

For $x_p\in \Sigma_\lambda$, using the definition in \eqref{xlambda_definition}, we find that 
\begin{equation*}
x_p^\lambda=x_q=x_0+qn(x_0),
\end{equation*}
where 
\begin{equation*}
q=\frac{(1+\lambda^2)p}{1+\lambda^2+2p\lambda}>0.
\end{equation*}
This result follows from the property given in \eqref{xlambda_definition} that
\begin{equation*}
|x_p-e_\lambda|=\frac{1+\lambda^2}{2\lambda}+p,\quad|x_q-e_\lambda|=\frac{1+\lambda^2}{2\lambda}-q,\quad|x_p-e_\lambda||x_q-e_\lambda|=\left(\frac{1+\lambda^2}{2\lambda}\right)^2.
\end{equation*}
Then, it follows that
\begin{eqnarray*}
\frac{\partial w_\lambda(x_0,t)}{\partial n} &=&\lim_{p \to 0^+}\frac{w_\lambda(x_p,t)-w_\lambda(x_0,t)}{-p}\\&=& \lim_{p \to 0^+}\left(\frac{v(x_q,t)-v(x_0,t)}{-p}+\frac{v(x_p,t)-v(x_0,t)}{p}\right) \\ &=& \lim_{q \to 0^+}\frac{v(x_q,t)-v(x_0,t)}{-q}\left(\frac{q}{p}\right)+\lim_{p \to 0^+}\frac{v(x_p,t)-v(x_0,t)}{p} \\ &=& -2\frac{\partial v(x_0,t)}{\partial n}.
\end{eqnarray*}
From \eqref{wlambda_zlambda_unit_outer_normal_comparison}, we conclude that inequality \eqref{v_unit_outer_normal} holds.
\end{proof}

\section{Asymptotic Maximum Principles}\label{sec:maxp}
In this section, we present the following asymptotic narrow region principle, which will play a crucial role in establishing Theorem \ref{main_theorem}.
\begin{lem}\label{asymptotic_narrow_region_principle}(Asymptotic narrow region principle)
Assume that $\Omega$ is a bounded narrow region with respect to $e_\lambda$ contained within the annulus defined by
\begin{equation}\label{narrow_annulus_definition}
\left\{x\in B_1(0)\ \vline\ \frac{1+\lambda^2}{2\lambda}<|x-e_\lambda|<\frac{1+\lambda^2}{2\lambda}+\delta\right\}
\end{equation}
for some small $\delta>0$, where $e_\lambda$ is defined by \eqref{elambda_definition}.

For sufficiently large $\overline{t}$, assume that $z_\lambda(x,t)\in C^2(\Omega)\times C^1([\,\overline{t},\infty])$ is bounded and lower semi-continuous in $x$ on $\overline{\Omega}$, and satisfies
\begin{equation}\label{problem_asymptotic_narrow_region_principle}
\left\{\begin{aligned}
&\frac{\partial z_\lambda(x,t)}{\partial t}-\Delta z_\lambda(x,t)\geqslant c_\lambda(x,t)z_\lambda(x,t), \qquad &(x,t)\in\Omega\times[\,\overline{t},\infty),  \\ &z_\lambda(x,t)\geqslant0, \qquad
& (x,t)\in\partial\Omega\times[\,\overline{t},\infty),
\end{aligned}\right.
\end{equation}
where $c_\lambda(x,t)$ is bounded from above. Then for sufficiently small $\delta$ the following statement holds:
\begin{equation}\label{result_asymptotic_narrow_region_principle}
\varliminf_{t\to \infty}z_\lambda(x,t)\geqslant0,\quad\forall x\in\Omega.
\end{equation}
\end{lem}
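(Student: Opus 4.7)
The plan is a parabolic barrier argument that first absorbs the narrowness of $\Omega$ into a positive auxiliary function and then kills the residual negativity via exponential decay in $t$.

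First I would exploit the narrowness to construct $\phi\in C^2(\bar\Omega)$ with $\phi>0$ and
\begin{equation*}
\Delta\phi(x)+K_0\phi(x)\le 0\quad\text{on }\bar\Omega,
\end{equation*}
where $K_0$ can be made arbitrarily large by shrinking $\delta$; concretely, taking $\phi(x)=\cos\bigl(\alpha(|x-e_\lambda|-r_\lambda-\delta/2)\bigr)$ with $r_\lambda=(1+\lambda^2)/(2\lambda)$ and $\alpha\sim\delta^{-1}$, a radial computation gives $\Delta\phi/\phi=-\alpha^2-(N-1)r^{-1}\alpha\tan(\alpha(r-r_\lambda-\delta/2))$, and the leading $-\alpha^2$ dominates on $\bar\Omega$. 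In particular I fix $K_0>\sup_{\Omega\times[\bar t,\infty)}c_\lambda+\eta$ for some $\eta>0$. Setting $\tilde z(x,t):=z_\lambda(x,t)/\phi(x)$, a direct computation converts \eqref{problem_asymptotic_narrow_region_principle} into
\begin{equation*}
\partial_t\tilde z-\Delta\tilde z-2\tfrac{\nabla\phi}{\phi}\cdot\nabla\tilde z\ge\Bigl(c_\lambda+\tfrac{\Delta\phi}{\phi}\Bigr)\tilde z=:d(x,t)\tilde z,
\end{equation*}
with $d\le-\eta<0$ uniformly on $\Omega\times[\bar t,\infty)$, $\tilde z\ge 0$ on $\partial\Omega\times[\bar t,\infty)$, and $\tilde z$ uniformly bounded (since $\phi$ has a positive lower bound on $\bar\Omega$).

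Next I would introduce the time-decaying barrier
\begin{equation*}
W(x,t):=\tilde z(x,t)+M_0 e^{-\eta(t-\bar t)/2},\qquad M_0:=\sup_{\bar\Omega\times[\bar t,\infty)}|\tilde z|,
\end{equation*}
so that $W\ge 0$ on $\bar\Omega\times\{\bar t\}$ and $W>0$ on $\partial\Omega\times[\bar t,\infty)$. A short calculation using $d\le-\eta$ yields
\begin{equation*}
\partial_t W-\Delta W-2\tfrac{\nabla\phi}{\phi}\cdot\nabla W-dW\ge\tfrac{\eta}{2}M_0 e^{-\eta(t-\bar t)/2}>0.
\end{equation*}
On every finite cylinder $\bar\Omega\times[\bar t,t_0]$ the standard parabolic maximum principle then applies: if $W$ took a negative value there, its infimum would be attained at some $(x^*,t^*)$ with $x^*\in\Omega$, $t^*\in(\bar t,t_0]$, and at that point $\partial_t W\le 0$, $\nabla W=0$, $\Delta W\ge 0$ would force the left-hand side above to satisfy $\le -dW(x^*,t^*)\le \eta W(x^*,t^*)<0$, contradicting the positive lower bound. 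Hence $W\ge 0$ for all $t\ge\bar t$, whence $\tilde z(x,t)\ge -M_0 e^{-\eta(t-\bar t)/2}\to 0$ and $\liminf_{t\to\infty}z_\lambda(x,t)=\phi(x)\liminf_{t\to\infty}\tilde z(x,t)\ge 0$.

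The only genuinely delicate point is the first step: producing $\phi$ with $-\Delta\phi/\phi$ strictly larger than the fixed upper bound on $c_\lambda$ uniformly on $\bar\Omega$. This is precisely where the "narrow region" hypothesis is consumed, and the explicit cosine (or an equivalent sine on a slightly thickened annulus) reduces it to an elementary radial calculation checking that the drift contribution $(N-1)r^{-1}\phi'$ cannot spoil the $-\alpha^2$ principal term once $\alpha\delta$ is kept well below $\pi$. After that, the barrier and the maximum principle are routine.
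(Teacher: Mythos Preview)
Your argument is correct and follows essentially the same strategy as the paper: both construct a positive radial function on the narrow annulus with $-\Delta\phi/\phi$ large of order $\delta^{-2}$ (the paper takes $\phi(x)=\sin\bigl((|x-e_\lambda|-r_\lambda)/\delta+\pi/2\bigr)$, which is just a shifted version of your cosine), reduce to a differential inequality for $z_\lambda/\phi$ with a uniformly negative zeroth-order coefficient, and then use an exponential in $t$ together with a first/second-order contradiction at a hypothetical interior negative minimum. The only cosmetic difference is that the paper folds the exponential in multiplicatively, working with $\tilde z_\lambda=e^{mt}z_\lambda/\phi$ and showing it is bounded below by a constant, whereas you add the decaying barrier $M_0e^{-\eta(t-\bar t)/2}$ afterwards; the two bookkeepings are equivalent.
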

\begin{proof}
Let $m$ be a fixed positive constant  that will be determined later.
Define
\begin{equation*}
\Tilde{z}_\lambda(x,t)=\frac{e^{mt}z_\lambda(x,t)}{\phi(x)}.
\end{equation*}
Then  $\Tilde{z}_\lambda(x,t)$  satisfies
\begin{eqnarray*}
\frac{\partial\Tilde{z}_\lambda(x,t)}{\partial t} &=& m\frac{e^{mt}z_\lambda(x,t)}{\phi(x)}+\frac{e^{mt}}{\phi(x)}\frac{\partial z_\lambda(x,t)}{\partial t},
\end{eqnarray*}
and
\begin{eqnarray*}
 \Delta\Tilde{z}_\lambda(x,t)&=&e^{mt}\left(\frac{\Delta z_\lambda(x,t)}{\phi(x)}-2\frac{\nabla\Tilde{z}_\lambda(x,t)\nabla\phi(x)}{\phi(x)}-\frac{\Delta\phi(x)}{\phi(x)}\frac{z_\lambda(x,t)}{\phi(x)}\right).
\end{eqnarray*}
Thus, we find that
\begin{equation}\label{tildezlambda_inequality}
\frac{\partial\Tilde{z}_\lambda(x,t)}{\partial t}-\Delta\Tilde{z}_\lambda(x,t)-2e^{mt}\frac{\nabla\Tilde{z}_\lambda(x,t)\nabla\phi(x)}{\phi(x)}\geqslant\left(c_\lambda(x,t)+\frac{\Delta\phi(x)}{\phi(x)}+m\right)\Tilde{z}_\lambda(x,t).
\end{equation}
Let $\phi(x)$ be defined as
\begin{equation*}
\phi(x)=\sin\left(\frac{|x-e_\lambda|-\frac{1+\lambda^2}{2\lambda}}{\delta}+\frac{\pi}{2}\right)=\sin\left(\frac{\sqrt{\left|x_1-\frac{1-\lambda^2}{-2\lambda}\right|^2+|x_2|^2+\cdots+|x_N|^2}-\frac{1+\lambda^2}{2\lambda}}{\delta}+\frac{\pi}{2}\right).
\end{equation*}
For each $x\in\Omega$, we have 
\begin{equation*}
\frac{\pi}{2}<\frac{|x-e_\lambda|-\frac{1+\lambda^2}{2\lambda}}{\delta}+\frac{\pi}{2}<1+\frac{\pi}{2}<\pi.
\end{equation*}
Direct calculation shows
\begin{eqnarray*}
\Delta\phi(x)&=&-\frac{1}{\delta^2}\sin\left(\frac{|x-e_\lambda|-\frac{1+\lambda^2}{2\lambda}}{\delta}+\frac{\pi}{2}\right)+\cos\left(\frac{|x-e_\lambda|-\frac{1+\lambda^2}{2\lambda}}{\delta}+\frac{\pi}{2}\right)\frac{1}{|x-e_\lambda|\delta}(n-1)\\&<&-\frac{1}{\delta^2}\sin\left(\frac{|x-e_\lambda|-\frac{1+\lambda^2}{2\lambda}}{\delta}+\frac{\pi}{2}\right).
\end{eqnarray*}
Thus,
\begin{equation}\label{phi_equality}
\frac{\Delta\phi(x)}{\phi(x)}<-\frac{1}{\delta^2}.
\end{equation}

We claim that for any $T>\overline{t}$,
\begin{equation}\label{tildezlambda_geq_min0inf}
\Tilde{z}_\lambda(x,t)\geqslant\min\left\{0,\inf_{\Omega}\Tilde{z}_\lambda(x,\overline{t})\right\},\quad(x,t)\in\Omega\times[\overline{t},T].
\end{equation}
If \eqref{tildezlambda_geq_min0inf} is not true, by \eqref{problem_asymptotic_narrow_region_principle} and the lower semi-continuity of $z_\lambda$ on $\overline{\Omega}\times[\overline{t},T]$, there exists $(x_0,t_0)$ in $\Omega\times(\overline{t},T]$ such that
\begin{equation}
\Tilde{z}_\lambda(x_0,t_0)=\min_{\overline{\Omega}\times(\overline{t},T]}\Tilde{z}_\lambda(x,t)<\min\{0,\inf_{\Omega}\Tilde{z}_\lambda(x,\Tilde{t})\}.
\end{equation}
Since
\begin{equation*}
\Tilde{z}_\lambda(x,t)=0,\quad(x,t)\in \overline{\Omega}\cap T_\lambda,
\end{equation*}
and
\begin{equation*}
\Tilde{z}_\lambda(x,t)>0,\quad(x,t)\in \overline{\Omega}\cap \partial{B_1(0)},
\end{equation*}
where $T_\lambda$ is defined in \eqref{Tlambda_definition}, the minimum point $x_0$ is an interior point of $\Omega$. Therefore,
\begin{eqnarray}
\label{tildezlambda_partialt}\frac{\partial\Tilde{z}_\lambda(x_0,t_0)}{\partial t}&\leqslant&0, \\ \label{laplace_tildezlambda}\Delta\Tilde{z}_\lambda(x_0,t_0)&\geqslant&0, \\ \label{gradient_tildezlambda}\nabla\Tilde{z}_\lambda(x_0,t_0)&=&0.
\end{eqnarray}
From \eqref{tildezlambda_inequality}, \eqref{phi_equality} and \eqref{tildezlambda_partialt}-\eqref{gradient_tildezlambda}, we have 
\begin{equation}\label{tildezlambda_leq_0}
0\geqslant\frac{\partial\Tilde{z}_\lambda(x_0,t_0)}{\partial t}-\Delta\Tilde{z}_\lambda(x_0,t_0)>\left(c_\lambda(x_0,t_0)-\frac{1}{\delta^2}+m\right)\Tilde{z}_\lambda(x_0,t_0).
\end{equation}
Since $c_\lambda(x,t)$ is bounded from above for all $(x,t)\in B_1(0)\times(0,\infty)$, we can choose $\delta$ small enough such that
\begin{equation*}
c_\lambda(x_0,t_0)-\frac{1}{\delta^2}<-\frac{1}{2\delta^2}.
\end{equation*}
Taking $m=1/2\delta^2$, we derive that the right hand side of the second inequality of \eqref{tildezlambda_leq_0} is strictly greater than $0$, since $\Tilde{z}_\lambda(x_0,t_0)<0$. This contradicts with \eqref{tildezlambda_leq_0}. 

Therefore, given the boundedness of $z_\lambda$, there exists a constant $C>0$ such that
\begin{equation*}
\Tilde{z}_\lambda(x,t)\geqslant\min\{0,\inf_{\Omega}\Tilde{z}_\lambda(x,\overline{t})\}\geqslant-C,\quad(x,t)\in\Omega\times[\overline{t},T].
\end{equation*}
Thus, we have
\begin{equation*}
z_\lambda(x,t)\geqslant e^{-mt}(-C),\quad\forall t>\overline{t}.
\end{equation*}
Taking the limit $t\to\infty$, we obtain
\begin{equation*}
\varliminf_{t\to \infty}z_\lambda(x,t)\geqslant0,\quad\forall x\in\Omega.
\end{equation*}

This complete the proof of Lemma \ref{asymptotic_narrow_region_principle}.
\end{proof}

In the proof of the main theorem, we will also use the following two classical maximum principles. For the convenience of the reader, we now show them in the version suitable for this article.

\begin{lem}[Strong parabolic maximum principle for not necessarily non-negative coefficient]
\label{strong_parabolic_maximum_principle}
For each $\lambda\in(0,1)$, assume $z(x,t)\in C^{2,1}\left(\Sigma_\lambda\times(0,\infty)\right)\cap C\left(\overline{\Sigma_\lambda}\times[0,\infty)\right)$ and for $(x,t)\times\Sigma_\lambda\times(0,\infty)$, $z(x,t)\geqslant0$ satisfies that
\begin{equation*}
\frac{\partial z(x,t)}{\partial t}-\Delta z(x,t)-c(x,t)z(x,t)\geqslant 0,
\end{equation*}
where $c(x,t)$ is bounded in $\Sigma_\lambda\times(0,\infty)$. If $z(x,t)$ attains its minimum $0$ over $\overline{\Sigma_\lambda}\times [0,\infty)$ at a point $(x_0,t_0)\in\Sigma_\lambda\times(0,\infty)$, then $z(x,t)\equiv0$ in $\Sigma_\lambda\times(0,t_0]$.
\end{lem}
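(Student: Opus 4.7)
The plan is to reduce the statement to the classical strong parabolic maximum principle with a non-positive zeroth-order coefficient, via an exponential multiplier, and then to invoke that classical result directly. Since $c(x,t)$ is bounded on $\Sigma_\lambda\times(0,\infty)$, one can pick a constant $M>0$ with $c(x,t)\leqslant M$ everywhere. Setting $\tilde z(x,t):=e^{-Mt}z(x,t)$, the function $\tilde z$ is still non-negative, has the same regularity as $z$, and attains its minimum value $0$ at the same interior point $(x_0,t_0)$.

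A direct differentiation shows
\begin{equation*}
\frac{\partial \tilde z}{\partial t}-\Delta \tilde z \;=\; e^{-Mt}\!\left(\frac{\partial z}{\partial t}-\Delta z - Mz\right) \;\geqslant\; e^{-Mt}\bigl(c(x,t)-M\bigr)z \;=\; \bigl(c(x,t)-M\bigr)\tilde z,
\end{equation*}
so $\tilde z$ is a supersolution of a parabolic operator whose zeroth-order coefficient $\tilde c:=c-M$ is bounded and, crucially, non-positive. This is exactly the setting of the classical strong parabolic maximum principle (as found, for instance, in Protter--Weinberger or Evans): a non-negative supersolution of such an operator that attains the interior minimum value $0$ at some point $(x_0,t_0)$ must vanish identically on the parabolic past $\Sigma_\lambda\times(0,t_0]$, provided the spatial domain is connected (which $\Sigma_\lambda$ is, being a spherical cap in $B_1(0)$).

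Applying that classical result to $\tilde z$ yields $\tilde z\equiv 0$ on $\Sigma_\lambda\times(0,t_0]$, and unwinding the change of variable through $z=e^{Mt}\tilde z$ gives $z\equiv 0$ on $\Sigma_\lambda\times(0,t_0]$, which is the desired conclusion. The only genuinely non-routine point is the choice of the exponential factor that absorbs the possibly positive part of $c$; once this reduction is in place, the remaining hypotheses (connectedness of $\Sigma_\lambda$, $C^{2,1}$ regularity of $z$, boundedness of $\tilde c$) are precisely those of the classical statement, so no further argument is needed.
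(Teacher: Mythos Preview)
Your proof is correct and follows essentially the same approach as the paper: both multiply by $e^{-Mt}$ (the paper takes $M=c_0:=\sup c$) to reduce to a supersolution of a parabolic operator with non-negative zeroth-order coefficient, then invoke the classical strong maximum principle (the paper cites Evans) to conclude $\tilde z\equiv 0$ on $\Sigma_\lambda\times(0,t_0]$ and hence $z\equiv 0$ there.
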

\begin{proof}
For each $\lambda\in(0,1)$, we set $c_0=\mathop{sup}\limits_{\Sigma_\lambda\times(0,\infty)}c(x,t)$ and let 
\begin{equation*}
\overline{z}(x,t)=e^{-c_0t}z(x,t),
\end{equation*}
then for $(x,t)\times\Sigma_\lambda\times(0,\infty)$, $\overline{z}(x,t)=e^{-c_0t}z(x,t)\geqslant 0$ and satisfies that 
\begin{equation*}
\begin{aligned}
&\quad\frac{\partial\overline{z}(x,t)}{\partial t}-\Delta\overline{z}(x,t)+(c_0-c(x,t))\overline{z}(x,t)\\&=-c_0e^{-c_0t}z(x,t)+e^{-c_0t}\frac{\partial z(x,t)}{\partial t}-e^{-c_0t}\Delta z(x,t)+c_0e^{-c_0t}z(x,t)-c(x,t)e^{-c_0t}z(x,t)\\&=e^{-c_0t}\left(\frac{\partial z(x,t)}{\partial t}-\Delta z(x,t)-c(x,t)z(x,t)\right)\geqslant 0.
\end{aligned}
\end{equation*}
If $z(x,t)$ attains its minimum $0$ at $(x_0,t_0)$, then $\overline{z}(x,t)$ also gets its minimum $0$ at the point $(x_0,t_0)$ over $\overline{\Sigma_\lambda}\times[0,\infty)$. Considering $c_0-c(x,t)\geqslant 0$ in $\Sigma_\lambda\times(0,\infty)$, then from the strong parabolic maximum principle with $c(x,t)\geqslant 0$ (Theorem 12 in \cite{evans}, Chapter 7), we can obtain that for $(x,t)\in\Sigma_\lambda\times(0,t_0]$, we have $\overline{z}(x,t)\equiv0$ and therefore              $z(x,t)=e^{c_0t}\,\overline{z}(x,t)\equiv0$.
\end{proof}
\begin{lem}[Parabolic hopf's lemma for not necessarily non-negative coefficient]
\label{parabolic_hopf_lemma}
For each $\lambda\in(0,1)$, we let $(x_0,t_0)$ be a point on the boundary of $\Sigma_\lambda\times(0,T)$ for $\forall
T>0$ such that $z(x_0,t_0)=0$ is the minimum in $\overline{\Sigma_\lambda}\times[0,T]$. Assume that there exists a neighborhood $V:=|x-x_0|^2+|t-t_0|^2<R^2_0$ of $(x_0,t_0)$ such that for $(x,t)\in V\cap\left(\Sigma_\lambda\times(0,T)\right)$, $z(x,t)>0$ and satisfies
\begin{equation*}
\frac{\partial z(x,t)}{\partial t}-\Delta z(x,t)-c(x,t)z(x,t)\geqslant 0,
\end{equation*} 
where $c(x,t)$ is bounded in $\Sigma_\lambda\times(0,T)$. If there exists a sphere $S:=|x-x'|^2+|t-t'|^2<R$ passing through $(x_0,t_0)$ and contained in $\overline{\Sigma_\lambda}\times[0,T]$ and $(x_0,t_0)\not=(x',t')$ , then under the assumptions made above, we have 
\begin{equation*}
\frac{\partial z}{\partial n}(x_0,t_0)<0,
\end{equation*}
where $n$ is the unit outer normal of $\partial \Sigma_\lambda\,$ for the fixed $t_0$. 
\end{lem}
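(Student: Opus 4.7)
The plan is to reduce the statement to the classical parabolic Hopf lemma (with non-negative zeroth-order coefficient) by the very same exponential substitution already used in the proof of Lemma \ref{strong_parabolic_maximum_principle}. This is natural because the only obstruction to citing the classical Hopf lemma directly is the sign of $c(x,t)$; everything geometric (the interior sphere, the fact that $(x_0,t_0)\neq(x',t')$, and the definition of the spatial outer normal $n$) is untouched by the substitution.

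Concretely, I would set $c_0=\sup_{\Sigma_\lambda\times(0,T)}c(x,t)$, which is finite by hypothesis, and define $\overline{z}(x,t)=e^{-c_0 t}z(x,t)$ on $V\cap(\Sigma_\lambda\times(0,T))$. A direct calculation, identical to the one carried out in Lemma \ref{strong_parabolic_maximum_principle}, yields
\begin{equation*}
\frac{\partial\overline{z}}{\partial t}-\Delta\overline{z}+\bigl(c_0-c(x,t)\bigr)\overline{z}=e^{-c_0 t}\left(\frac{\partial z}{\partial t}-\Delta z-c(x,t)z\right)\geqslant 0,
\end{equation*}
and now the coefficient $c_0-c(x,t)$ is non-negative on $\Sigma_\lambda\times(0,T)$. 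Because $e^{-c_0 t}>0$, the substitution preserves positivity (so $\overline{z}>0$ on $V\cap(\Sigma_\lambda\times(0,T))$) and preserves the vanishing at the boundary point (so $\overline{z}(x_0,t_0)=0$, which remains the minimum over $\overline{\Sigma_\lambda}\times[0,T]$).

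At this stage I would invoke the classical parabolic Hopf lemma for the operator $\partial_t-\Delta+\tilde c$ with $\tilde c=c_0-c\geqslant 0$; the sphere $S$ given in the hypothesis provides exactly the interior ball (in space–time) tangent to $\partial(\Sigma_\lambda\times(0,T))$ at $(x_0,t_0)$, with $(x_0,t_0)\neq(x',t')$ ensuring a well-defined spatial outward direction. The conclusion is $\partial\overline{z}/\partial n\,(x_0,t_0)<0$, where $n$ is the unit outer normal to $\partial\Sigma_\lambda$ at fixed time $t_0$. Translating back via $\overline{z}=e^{-c_0 t}z$ gives
\begin{equation*}
\frac{\partial z}{\partial n}(x_0,t_0)=e^{c_0 t_0}\,\frac{\partial\overline{z}}{\partial n}(x_0,t_0)<0,
\end{equation*}
which is the required inequality.

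The main place to be careful, rather than an actual obstacle, is to verify that the spatial nature of the normal derivative is compatible with a Hopf lemma stated in space–time: the interior sphere $S$ lives in $\mathbb{R}^N\times\mathbb{R}$, but because we evaluate the derivative in the spatial direction $n$ at fixed $t_0$ and because $S$ is tangent at $(x_0,t_0)$ to the lateral boundary $\partial\Sigma_\lambda\times\{t_0\}$ (this is what $(x_0,t_0)\neq(x',t')$ buys us together with $t_0\in(0,T)$), the classical Hopf lemma yields precisely a strictly negative spatial normal derivative. With that understood, the whole argument is a one–line reduction to the non-negative coefficient version plus a citation.
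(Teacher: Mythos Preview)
Your proposal is correct and follows essentially the same approach as the paper: the paper likewise sets $c_0=\sup c$, introduces $\overline z=e^{-c_0t}z$, checks that $\overline z$ satisfies the inequality with non-negative coefficient $c_0-c$, invokes Friedman's parabolic Hopf lemma, and reads off the sign of the spatial normal derivative. Your write-up is in fact slightly more precise than the paper's, since you retain the positive factor $e^{c_0 t_0}$ when translating $\partial_n\overline z$ back to $\partial_n z$.
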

\begin{proof}
For each $\lambda\in(0,1)$, we set $c_0=\mathop{sup}\limits_{\Sigma_\lambda\times(0,\infty)}c(x,t)$ and let
\begin{equation*}
\overline{z}(x,t)=e^{-c_0t}z(x,t),
\end{equation*}
then at the point $(x_0,t_0)$, $z(x,t)$ also gets its minimum $0$ and for $(x,t)\in V\cap\left(\Sigma_\lambda\times(0,T)\right)$, $\overline{z}(x,t)>0$ and satisfies that
\begin{equation*}
\frac{\partial\overline{z}(x,t)}{\partial t}-\Delta\overline{z}(x,t)+(c_0-c(x,t))\overline{z}(x,t)\geqslant 0.
\end{equation*}
Considering $(c_0-c(x,t))\geqslant 0$ in $\sigma_\lambda\times(0,T)$, then from the parabolic hopf's lemma(Theorem 2 in \cite{friedman-1958}), we can derive that every outer non-tangential derivative $\partial \overline{z}\backslash \partial\tau$ at $(x_0,t_0)$ is negative. Particularly, by the definition of $\overline{z}(x,t)$, for the fixed $t_0$ and the unit outer normal $n$ of $\partial \Sigma_\lambda$, we have $\frac{\partial z}{\partial n}(x_0,t_0)=\frac{\partial \overline{z}}{\partial n}(x_0,t_0)<0$.
\end{proof}

\section{Proof of the Main Theorem}\label{sec:proof}
We will carry out the proof in two steps. For simplicity, choose any direction within the region to be the $x_1$ direction. The first step is to show that for $\lambda$ sufficiently close to the right end of the domain, the following holds for all $\varphi\in\omega(u)$
\begin{equation}\label{The_starting_position}
\psi(x)\geqslant0,\quad\forall x\in\Sigma_\lambda.
\end{equation}
This provides the initial position to move the plane. We then move the plane $T_\lambda$ to the left as long as the inequality \eqref{The_starting_position} continues to hold, until it reaches its limiting position.
Define
\begin{equation}\label{lambda0_definition}
\lambda_0=\inf\{\lambda\geqslant0\,|\,\psi_\mu(x)\geqslant0, \,\textrm{for all} \,\,\varphi\in\omega(u),\, x\in\Sigma_\mu,\,\mu\geqslant\lambda\}.
\end{equation}
We will show that $\lambda_0=0$. Since $x_1$ direction can be chosen arbitrarily,  this implies that for any $\varphi\in\omega(u)$, $\varphi(x)$ is radially symmetric and $(1+|x|^2)^{\frac{N-2}{2}}\varphi(x)$ is monotone decreasing about the origin. We will now detail these two steps.

\begin{proof}[Proof of Theorem \ref{main_theorem}.] For all $\varphi\in\omega(u)$, we assume $\varphi\not\equiv0$ in $B_1(0)$.

\textbf{Step 1.} We show that for $\lambda<1$ and sufficiently close to $1$, the following holds:
\begin{equation}\label{result_step1}
\psi_\lambda(x)\geqslant0, \quad \forall x\in\Sigma_\lambda, \quad \forall \varphi\in\omega(u).
\end{equation}

The Lipschitz continuity assumption (F1) on $f$ implies that $c_\lambda(x,t)$ is bounded. 
Additionally, we have
\begin{equation*}
z_\lambda(x,t)=\left(\frac{1+|x^\lambda|^2}{1+|x|^2}\right)^\frac{N-2}{2}u(x^\lambda,t)-u(x,t)>0,\quad(x,t)\in\partial\Sigma_\lambda\times(0,\infty)    
\end{equation*}
since $u(x,t)=0$, for $(x,t)\in \partial B_1(0)\times(0,\infty)$ and $u$ is positive in $B_1(0)$. Combining this with \eqref{zlambda_inequality}, we can apply Lemma \ref{asymptotic_narrow_region_principle} to conclude that \eqref{result_step1} holds.

\textbf{Step 2.} We will demonstrate that the parameter $\lambda_0$, as defined in \eqref{lambda0_definition}, is equal to zero, that is to say
\begin{equation}\label{stopping_position}
\lambda_0=0.
\end{equation}

Assume for contradiction that $\lambda_0>0$. We will demonstrate that $T_{\lambda_0}$ can be shifted slightly to the left, thereby contradicting the definition of $\lambda_0$.

To begin with, we intend to determine the sign of $\psi_{\lambda_0}$ for any $\varphi\in\omega(u)$ and all $x\in\Sigma_{\lambda_0}$ under the case $\lambda_0>0$. To achieve this, according to the definition \eqref{psilambda_definition}, we now need to discuss the inequality that $z_{\lambda_0}(x,t)$ satisfies as $t\to\infty$. From the definition of $\omega(u)$, for each $\varphi\in \omega(u)$, there exists a sequence $\{t_k\}$ such that $u(x,t_k)\to \varphi(x)$ as $t_k\to \infty$.
Define
\begin{equation}\label{uk_definition}
    u_k(x,t)=u(x,t+t_k-1),
\end{equation}
and
\begin{equation}\label{fk_definition}
f_k(|x|,u,t)=f(|x|,u,t+t_k-1).
\end{equation}
Then we have $u_k(x,1)\to\varphi(x)$ in the sense of $C(B_1(0))$ as $k\to\infty$.
Let $Q_1:=B_1(0)\times[1-\varepsilon_0,1+\varepsilon_0]$. 
We now have
\begin{equation}\label{uk_problem}
\left\{\begin{array}{ll}
\frac{\partial u_k}{\partial t}(x,t)-\Delta u_k(x,t)=f_k(|x|,u_k(x,t),t), &(x,t)\in Q_1,  \\
u_k(x,t)=0, & (x,t)\in \partial B_1(0)\times[1-\varepsilon_0,1+\varepsilon_0].
\end{array}\right .
\end{equation}

Similarly as \eqref{v_definition}-\eqref{zlambda_ulambda_u}, we have the following definitions of functions
\begin{eqnarray}
	\label{vk_definition}
	v_k(x,t)&=&(1+|x|^2)^\frac{N-2}{2}u_k(x,t),\\
        \label{wk_definition}
	w_{\lambda_0,k}(x,t)&=&v_k(x^{\lambda_0},t)-v_k(x,t),\\
	\label{zlambda0k_definition}	z_{{\lambda_0},k}(x,t)&=&z_{\lambda_0}(x,t+t_k-1)=\left(\frac{1+|x^{\lambda_0}|^2}{1+|x|^2}\right)^\frac{N-2}{2}u_k(x^{\lambda_0},t)-u_k(x,t).
\end{eqnarray}
It follows from \eqref{uk_problem} and Lemma \ref{lemma_zlambda_inequality} that
\begin{equation*}
\frac{\partial z_{{\lambda_0},k}}{\partial t}(x,t)-\Delta z_{{\lambda_0},k}(x,t)\geqslant c_{{\lambda_0},k}(x,t)z_{{\lambda_0},k}(x,t),\quad(x,t)\in\Sigma_{\lambda_0}\times[\overline{t},\infty),
\end{equation*}
where
\begin{equation*}
c_{{\lambda_0},k}(x,t)=c_{\lambda_0}(x,t+t_k-1)=\frac{f_k(|x|,(1+|x|^2)^{-\frac{N-2}{2}}v_k(x^{\lambda_0},t),t)-f_k(|x|,u_k(x,t),t)}{(1+|x|^2)^{-\frac{N-2}{2}}v_k(x^{\lambda_0},t)-(1+|x|^2)^{-\frac{N-2}{2}}v_k(x,t)}.
\end{equation*}

 Based on the relationship between the functions \eqref{vk_definition}-\eqref{zlambda0k_definition} and $u_k$ defined as \eqref{uk_definition}, using the Lemma \ref{Convergence_of_sequences}, we deduce the existence of subsequences $v_k$, $w_{\lambda_0,k}$ and $z_{{\lambda_0},k}$ which converge uniformly to the respective functions $v_\infty$, $w_{\lambda_0,\infty}$ and $z_{{\lambda_0},\infty}$ all in the sense of $C^{2,1}\left(\Sigma_{\lambda_0}\times[1-\varepsilon_0,1+\varepsilon_0]\right)$ as $k \to \infty$ and they satisfies
 \begin{equation*}
 \begin{aligned}
 z_{\lambda_0,\infty}(x,t)&=(1+|x|^2)^{-\frac{N-2}{2}}w_{\lambda_0,\infty}(x,t)=(1+|x|^2)^{-\frac{N-2}{2}}\left(v_\infty(x^{\lambda_0},t)-v_\infty(x,t)\right)\\&=\left(\frac{1+|x^{\lambda_0}|^2}{1+|x|^2}\right)^\frac{N-2}{2}u_\infty(x^{\lambda_0},t)-u_\infty(x,t).
 \end{aligned}
 \end{equation*}
 Furthermore, both in the sense of $C\left(\Sigma_{\lambda_0}\times[1-\varepsilon_0,1+\varepsilon_0]\right)$ as $k\to\infty$ we have
\begin{eqnarray*}
\frac{\partial z_{{\lambda_0},k}}{\partial t}(x,t)-\Delta z_{{\lambda_0},k}(x,t)&\to&\frac{\partial z_{{\lambda_0},\infty}}{\partial t}(x,t)-\Delta z_{{\lambda_0},\infty}(x,t),\\ c_{{\lambda_0},k}(x,t)&\to& c_{{\lambda_0},\infty}(x,t),
\end{eqnarray*}
where $c_{{\lambda_0},\infty}(x,t)$ is bounded in $\Sigma_{\lambda_0}\times[1-\varepsilon_0,1+\varepsilon_0]$, this follows from the proof of Lemma \ref{Convergence_of_sequences}, where it is established that the sequence $f_k$ within the definition of $c_{\lambda_0,k}$ uniformly converges to $f_\infty$ satisfying (F1) in the sense of $C\left(\Sigma_{\lambda_0}\times[1-\varepsilon_0,1+\varepsilon_0]\right)$. For any $\varphi\in\omega(u)$, by the definition of the limit set $\omega(u)$, there exists $t_k$ such that $z_{\lambda_0}(x,t_k)\to\psi_{\lambda_0}(x)$ in the sense of $C(\Sigma_{\lambda_0})$ as $t_k\to\infty$ . Particularly, according to the convergence of $z_{\lambda_0,k}$, in the sense of $C^2(\Sigma_{\lambda_0})$ we have
\begin{equation}\label{zlambdainftyx1_equals_psi}
z_{\lambda_0}(x,t_k)=z_{{\lambda_0},k}(x,1)\to z_{{\lambda_0},\infty}(x,1)=\psi_{\lambda_0}(x) \quad as\,\,k\to\infty.
\end{equation}

Combining with the continuity of $z_{\lambda_0,\infty}$ with respect to $t$ and the definition of $\lambda_0$, we deduce $z_{{\lambda_0},\infty}(x,t)$ satisfies the following inequalities
\begin{equation}\label{problem_strong_parabolic_maximum_principle}
\left\{
\begin{aligned}
&\frac{\partial z_{{\lambda_0},\infty}(x,t)}{\partial t}-\Delta z_{{\lambda_0},\infty}(x,t)\geqslant c_{{\lambda_0},\infty}(x,t)z_{{\lambda_0},\infty}(x,t),\quad &(x,t)\in\Sigma_{\lambda_0}\times[1-\varepsilon_0,1+\varepsilon_0],  \\
&z_{{\lambda_0},\infty}(x,t)\geqslant0, \quad &(x,t)\in\Sigma_{\lambda_0}\times[1-\varepsilon_0,1+\varepsilon_0]. 
\end{aligned}\right.                               
\end{equation}
We apply the Lemma \ref{strong_parabolic_maximum_principle} to \eqref{problem_strong_parabolic_maximum_principle} in $\Sigma_{{\lambda_0}}\times[1-\varepsilon_0,1+\varepsilon_0]$ to get either
\begin{equation}\label{zlambda0infty_eqiv_0}
z_{{\lambda_0},\infty}(x,t)\equiv0\quad \textrm{for} \quad (x,t)\in\Sigma_{\lambda_0}\times[1-\varepsilon_0,1+\varepsilon_0]
\end{equation}
or
\begin{equation}\label{zlambda0infty_gg_0}
z_{{\lambda_0},\infty}(x,t)> 0\quad \textrm{for} \quad (x,t)\in\Sigma_{\lambda_0}\times(1-\varepsilon_0,1+\varepsilon_0].
\end{equation}
In case \eqref{zlambda0infty_eqiv_0}, since $z_{{\lambda_0},\infty}(x,t)\equiv0$, for all $(x,t)\in\Sigma_{\lambda_0}\times[1-\varepsilon_0,1+\varepsilon_0]$ we have 
\begin{equation}\label{vinfty(xlambda)_equal_vinfty(x)}
w_{\lambda_0,\infty}(x,t)\equiv0,\quad i.e.,\quad v_{\infty}(x^{\lambda_0},t)\equiv v_\infty(x,t).
\end{equation}
From \eqref{v_equation}, for $(x,t)\in\Sigma_{\lambda_0}\times(1-\varepsilon_0,1+\varepsilon_0]$ we have
\begin{equation}\label{vx_equation}
\Delta_{(g,x)}v_\infty(x,t)= \left(\frac{1+|x|^2}{2}\right)^{2}\frac{\partial v_\infty}{\partial t}(x,t)-\frac{(1+|x|^2)^{\frac{N+2}{2}}}{4}f_\infty(|x|,(1+|x|^2)^{-\frac{N-2}{2}}v_\infty(x,t),t)+\frac{N(N-2)}{4}v_\infty(x,t),
\end{equation}
and
\begin{equation}\label{vxlambda0_equation}
\begin{aligned}
\Delta_{(g,x)}v_\infty(x^{\lambda_0},t)&= \left(\frac{1+|x^{\lambda_0}|^2}{2}\right)^{2}\frac{\partial v_\infty}{\partial t}(x^{\lambda_0},t)\\&\quad-\frac{(1+|x^{\lambda_0}|^2)^{\frac{N+2}{2}}}{4}f_\infty(|x^{\lambda_0}|,(1+|x^{\lambda_0}|^2)^{-\frac{N-2}{2}}v_\infty(x^{\lambda_0},t),t)+\frac{N(N-2)}{4}v_\infty(x^{\lambda_0},t).
\end{aligned}
\end{equation}
By \eqref{vinfty(xlambda)_equal_vinfty(x)} we can obtain that
\begin{equation*}
\Delta_{(g,x)}w_{\lambda_0,\infty}(x,t)\equiv0,\quad\left(\frac{1+|x^\lambda|^2}{2}\right)^{2}\frac{\partial w_{\lambda_0,\infty}}{\partial t}(x^\lambda,t)\equiv0,\quad \frac{N(N-2)}{4}w_{\lambda_0,\infty}(x,t)\equiv0
\end{equation*}
for $(x,t)\in\Sigma_{\lambda_0}\times[1-\varepsilon_0,1+\varepsilon_0]$. Combining \eqref{vx_equation} and \eqref{vxlambda0_equation}, we finally arrive at
\begin{equation}\label{fx_equals_fxlambda}
(1+|x|^2)^{\frac{N+2}{2}}f_\infty(|x|,(1+|x|^2)^{-\frac{N-2}{2}}v_\infty(x,t),t)\equiv(1+|x^{\lambda_0}|^2)^{\frac{N+2}{2}}f_\infty(|x^{\lambda_0}|,(1+|x^{\lambda_0}|^2)^{-\frac{N-2}{2}}v_\infty(x,t),t)
\end{equation}
for $(x,t)\in\Sigma_{\lambda_0}\times(1-\varepsilon_0,1+\varepsilon_0]$. Since Lemma \ref{proof_of_xlambda_distance} implies that $|x|>|x^{\lambda_0}|$, \eqref{fx_equals_fxlambda} contradicts the assumption (F3) of $f$. It follows that case \eqref{zlambda0infty_eqiv_0} is invalid.

Next from case \eqref{zlambda0infty_gg_0}, we can derive that for all $\varphi\in\omega(u)$
\begin{equation}\label{psilambda0_geq_0}
z_{{\lambda_0},\infty}(x,1)=\psi_{\lambda_0}(x)>0,\quad x\in\Sigma_{\lambda_0}.
\end{equation}
Since $\lambda_0>0$, we now attempt to slightly shift $\lambda_0$ to the left, if $\lambda_0$ still meets definition \eqref{lambda0_definition}, we can construct a contradiction. For any $l>0$ small, we set $\overline{V_{\lambda_0+l}}=x\in\left\{x\in\overline{B_1(0)}\,\left||x-e_{\lambda_0}|\geqslant\frac{1+\lambda_0^2}{2\lambda_0}+l\right.\right\}$, for $\psi_{\lambda_0}$, there exists a $C_\varphi>0$, such that
\begin{equation}\label{psilambda0_geq_cphi}
\psi_{\lambda_0}\geqslant C_\varphi>0,\quad x\in\overline{V_{\lambda_0+l}}.
\end{equation}
We now show that, for all $\varphi\in\omega(u)$, there exists an universal constant $C_0$ such that
\begin{equation}\label{psilambda0_geq_c0}
\psi_{\lambda_0}\geqslant C_0>0,\quad x\in\overline{V_{\lambda_0+l}}.
\end{equation}
If not, there exists a sequence of functions $\{\psi_{\lambda_0}^k\}$ with respect to ${\varphi^k}\subset\omega(u)$ and a sequence of points $\{x^k\}\subset\overline{V_{\lambda_0+l}}$ such that for each $k$ we have 
\begin{equation}\label{psilambda0k_leq_1/k}
\psi_{\lambda_0}^k(x^k)<\frac{1}{k}.
\end{equation}
By the compactness of $\omega(u)$ in $C(\overline{B_1(0)})$, there exists $\psi_{\lambda_0}^0$ which corresponds to some $\varphi^0\in\omega(u)$ and $x^0\in\overline{V_{\lambda_0+l}}$  such that
\begin{equation*}
\psi_{\lambda_0}^k(x^k)\to\psi_{\lambda_0}^0(x^0),
\end{equation*}
as $k\to\infty$ in the sense of $C(B_1(0))$. Now by \eqref{psilambda0k_leq_1/k} and the definition of $\lambda_0$, we obtain
\begin{equation*}
\psi_{\lambda_0}^0(x^0)=0,
\end{equation*}
which contradicts \eqref{psilambda0_geq_0}, since $\varphi^0\in\omega(u)$. Thus \eqref{psilambda0_geq_c0} must be established.

From \eqref{psilambda0_geq_c0} and the continuity of $\psi_\lambda$ with respect to $\lambda$, for each $\psi_\lambda$, under a fixed $C_0$, there exist $\varepsilon_\varphi>0$ such that
\begin{equation*}
\psi_\lambda(x)\geqslant\frac{C_0}{2}>0,\quad x\in\overline{V_{\lambda_0+l}},\quad\forall\lambda\in(\lambda_0-{\varepsilon_\varphi}\,,\lambda_0).
\end{equation*}
Similarly due to the compactness of $\omega(u)$ in $C(\overline{B_1(0)})$, for all $\psi_\lambda$, there exists a universal $\varepsilon>0$ such that
\begin{equation}\label{psilambda_geq_c0/2}
\psi_\lambda(x)\geqslant\frac{C_0}{2}>0\,,\quad x\in\overline{V_{\lambda_0+l}}\,,\quad\forall\lambda\in(\lambda_0-{\varepsilon}\,,\lambda_0).
\end{equation}
Consequently, for $t$ sufficiently large, we have
\begin{equation*}
z_\lambda(x,t)\geqslant0\,,\quad x\in\overline{V_{\lambda_0+l}}\,,\quad\forall\lambda\in(\lambda_0-{\varepsilon}\,,\lambda_0).
\end{equation*}
Since $l>0$ is small, by \eqref{psilambda0k_leq_1/k}, we can choose $\varepsilon>0$ small, such that $\Sigma_\lambda\backslash V_{\lambda_0+l}$ is a narrow region defined as \eqref{narrow_annulus_definition} for $\lambda\in(\lambda_0-\varepsilon,\lambda_0)$, then applying the \emph{asymptotic narrow region principle} (Lemma \ref{asymptotic_narrow_region_principle}), we arrive at 
\begin{equation}\label{psilambda_geq_0_in_narrowregion}
\psi_\lambda(x)\geqslant0\,,\quad\forall x\in\Sigma_\lambda\backslash V_{\lambda_0+l}.
\end{equation}
Combining \eqref{psilambda_geq_c0/2} and \eqref{psilambda_geq_0_in_narrowregion}, we derive that
\begin{equation*}
\psi_\lambda(x)\geqslant0\,,\quad\forall x\in\Sigma_\lambda\,,\quad\forall\lambda\in(\lambda_0-{\varepsilon}\,,\lambda_0)\,,\quad\forall\varphi\in\omega(u).
\end{equation*}
This contradicts the definition of $\lambda_0$. Therefore, $\lambda_0=0$ must be true.

As a result, \eqref{stopping_position} implies that for all $\varphi\in\omega(u)$
\begin{equation*}
\psi_0(x)\geqslant0\,,\quad\forall x\in\Sigma_0,
\end{equation*}
that is to say, for all $\varphi\in\omega(u)$ and $x\in\Sigma_0$
\begin{equation*}
\begin{aligned}
z_0(x,1)&=\left(\frac{1+|x^0|^2}{1+|x|^2}\right)^\frac{N-2}{2}u_\infty(x^0,1)-u_\infty(x,1)\\&=\left(\frac{1+|x^0|^2}{1+|x|^2}\right)^\frac{N-2}{2}\varphi(x^0)-\varphi(x)\geqslant0.
\end{aligned}
\end{equation*}
Since $x^0=(-x_1,x_2,\cdots,x_N)$ for $x\in\Sigma_0$ with $0<x_1<1$, finally we can get that 
\begin{equation}\label{phi-x1_geq_phix1}
\varphi(-x_1,x_2,\cdots,x_N)\geqslant\varphi(x_1,x_2,\cdots,x_N)
\end{equation}
Since the $x_1$-direction can be chosen arbitrarily, \eqref{phi-x1_geq_phix1} implies that all $\varphi(x)$ are radially symmetric about the origin.
Combining with \eqref{problem_strong_parabolic_maximum_principle} and the proof of \eqref{psilambda0_geq_0}, we can derive that for $0<\lambda<1$, $z_{\lambda,\infty}$ satisfies 
\begin{equation*}
\left\{
\begin{aligned}
&\frac{\partial z_{\lambda,\infty}(x,t)}{\partial t}-\Delta z_{\lambda,\infty}(x,t)\geqslant c_{\lambda,\infty}(x,t)z_{\lambda,\infty}(x,t),\quad &(x,t)\in\Sigma_{\lambda}\times[1-\varepsilon_0,1+\varepsilon_0],  \\
&z_{\lambda,\infty}(x,t)=0, \quad &(x,t)\in T_\lambda \times[1-\varepsilon_0,1+\varepsilon_0],\\&z_{\lambda,\infty}(x,t)>0,&(x,t)\in\Sigma_{\lambda}\times[1-\varepsilon_0,1+\varepsilon_0]. 
\end{aligned}\right.                               
\end{equation*}
where $c_{\lambda,\infty}$ is bounded, then we can apply Lemma \ref{parabolic_hopf_lemma} to obtain that 
\begin{equation*}
\frac{\partial z_{\lambda,\infty}(x,1)}{\partial n}=\frac{\partial \psi_\lambda(x)}{\partial n}<0,\quad x\in T_\lambda,\quad \forall\,0<\lambda<1,
\end{equation*}
where $n$ is the outer normal vector of $\partial\overline{\Sigma_\lambda}$. From Lemma \ref{lemma_unit_outer_normal}, we can conclude that
\begin{equation}\label{partialphipartialn_gep_0}
\frac{\partial[(1+|x|^2)^{\frac{N-2}{2}}\varphi(x)]}{\partial n}>0.
\end{equation}
Under the conclusion that all $\varphi(x)$ are radially symmetric about the origin, from \eqref{partialphipartialn_gep_0} we can infer that for all $0<r<1$
\begin{equation}
\partial_r \left((1+r^2)^{\frac{N-2}{2}}\varphi(r)\right)<0,
\end{equation}
which shows the asymptotic monotonicity, now we complete the proof of Theorem \ref{main_theorem}.
\end{proof}
\section{Appendix}

We note that $B_1(0)=\{x\in\mathbb{R}^N:|x|<1\}$, $N\geqslant3$, set $S^+=\{X\in\mathbb{R}^{N+1}:|X|=1,X_{N+1}>0\}$ and define a mapping $P:(S^+,|dX|^2)\to(B_1(0),g)$ by 
\begin{equation*}
P(X_1,\cdots,X_N,X_{N+1})=\frac{1}{X_{N+1}+1}(X_1,\cdots,X_N).
\end{equation*}
The overall idea of the proof of \eqref{Tlambda_definition}-\eqref{xlambda_definition} is similar to that in \cite{shioji_watanabe-2012} in the case $a=1$, we omit the proof process. Strongly inspired by the Lemma A.1 and Lemma A.2 in \cite{naito_suzuki-1998}, here we give the proof of \eqref{xlambda_distance} and \eqref{operator_identity}.
\begin{lem}\label{proof_of_xlambda_distance}
We have
\begin{equation}\label{1+xlambda_comparison}
\frac{1+|x|^2}{1+|x^\lambda|^2}=\left(\frac{2\lambda}{1+\lambda^2}\right)^2|x-e_\lambda|^2.
\end{equation}
Therefore, we can derive $|x|>|x^\lambda|$  for $x\in\Sigma_\lambda$.
\end{lem}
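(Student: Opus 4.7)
The plan is a direct computation starting from the definition \eqref{xlambda_definition}. Set $a = e_\lambda$ and $r = \frac{1+\lambda^2}{2\lambda}$ so that
$$x^\lambda = a + r^2\,\frac{x-a}{|x-a|^2}.$$
The first thing I would record is the algebraic identity
$$|e_\lambda|^2 + 1 = \left(\frac{1-\lambda^2}{2\lambda}\right)^2 + 1 = \frac{(1-\lambda^2)^2 + 4\lambda^2}{4\lambda^2} = \left(\frac{1+\lambda^2}{2\lambda}\right)^2 = r^2,$$
so that $|a|^2 = r^2 - 1$. This is the one non-obvious fact and everything else is mechanical.

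Next I would expand $|x^\lambda|^2$:
$$|x^\lambda|^2 = |a|^2 + \frac{2r^2\,a\cdot(x-a)}{|x-a|^2} + \frac{r^4}{|x-a|^2},$$
and then add $1$ to both sides, using $|a|^2 + 1 = r^2$, to get
$$1 + |x^\lambda|^2 = \frac{r^2\bigl(|x-a|^2 + 2a\cdot(x-a) + r^2\bigr)}{|x-a|^2}.$$
The key simplification is that the numerator in the parenthesis collapses:
$$|x-a|^2 + 2a\cdot(x-a) + r^2 = |x|^2 - |a|^2 + r^2 = |x|^2 + 1,$$
where in the last step I substitute $r^2 - |a|^2 = 1$ from the identity above. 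This yields
$$1 + |x^\lambda|^2 = \frac{r^2(1+|x|^2)}{|x-e_\lambda|^2},$$
which rearranges precisely to \eqref{1+xlambda_comparison}.

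For the second assertion, note that $x \in \Sigma_\lambda$ means $|x-e_\lambda| > \frac{1+\lambda^2}{2\lambda} = r$, so the right-hand side of \eqref{1+xlambda_comparison} exceeds $1$. Hence $1 + |x|^2 > 1 + |x^\lambda|^2$, i.e.\ $|x| > |x^\lambda|$. No obstacle is anticipated: the entire argument is a one-line expansion plus the observation $|e_\lambda|^2 + 1 = r^2$, which is what makes the inversion on the Euclidean sphere of radius $r$ centered at $e_\lambda$ correspond to a reflection in the elliptic metric $g$.
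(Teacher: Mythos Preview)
Your proof is correct, and it is a little different from the paper's route. You expand $|x^\lambda|^2$ directly from the inversion formula and reduce everything to the single identity $|e_\lambda|^2+1=r^2$; the simplification $|x-a|^2+2a\cdot(x-a)+r^2=|x|^2+1$ then falls out immediately. The paper instead introduces an auxiliary point $a=-e_\lambda/|e_\lambda|^2$, checks that its reflection $a^\lambda$ is the origin, and then computes $|x^\lambda|=|x^\lambda-a^\lambda|$ via the classical inversion distance identity $\bigl|\,p/|p|^2-q/|q|^2\,\bigr|=|p-q|/(|p||q|)$, arriving at $|x^\lambda|^2=|e_\lambda|^2|x-a|^2/|x-e_\lambda|^2$ before adding $1$ and simplifying. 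Both arguments ultimately rest on the same numerical fact $|e_\lambda|^2+1=\bigl(\tfrac{1+\lambda^2}{2\lambda}\bigr)^2$, but yours is the more economical one: no auxiliary point, no appeal to an external inversion identity, just a two-line expansion. The paper's version has the minor conceptual advantage of making visible that the origin is the reflected image of $-e_\lambda/|e_\lambda|^2$, which is a nice piece of the elliptic-geometry picture, but for the purpose of proving \eqref{1+xlambda_comparison} your direct computation is cleaner.
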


\begin{proof}
We define $a=-e_\lambda /|e_\lambda|^2=(2\lambda/(1-{\lambda}^2),0,\cdots,0)$. From this definition, \eqref{elambda_definition} and \eqref{xlambda_definition}, by some elementary computations, we note that
\begin{eqnarray*}
a^\lambda &=& e_\lambda+\left(\frac{1+\lambda^2}{2\lambda}\right)^2\frac{a-e_\lambda}{|a-e_\lambda|^2}=0,
\\ |e_\lambda|^2+1&=&\left(\frac{1+\lambda^2}{2\lambda}\right)^2 = \frac{1-\lambda^2}{2\lambda}\cdot\frac{(1+\lambda^2)^2}{2\lambda(1-\lambda)^2}=|e_\lambda||a-e_\lambda|.
\end{eqnarray*}
Then we have 
\begin{eqnarray*}
|x^\lambda|^2 &=& |x^\lambda-a^\lambda|^2 \\
&=&\left|\left(\frac{1+\lambda^2}{2\lambda}\right)^2\left(\frac{x-e_\lambda}{|x-e_\lambda|^2}-\frac{a-e_\lambda}{|a-e_\lambda|^2}\right)\right|^2=(|e_\lambda|^2+1)^2\left|\frac{x-e_\lambda}{|x-e_\lambda|^2}-\frac{a-e_\lambda}{|a-e_\lambda|^2}\right|^2 \\
&=& \frac{(|e_\lambda|^2+1)^2|x-a|^2}{|x-e_\lambda|^2|a-e_\lambda|^2}=\frac{|e_\lambda|^2|a-e_\lambda|^2|x-a|^2}{|x-e_\lambda|^2|a-e_\lambda|^2}=\frac{|e_\lambda|^2|x-a|^2}{|x-e_\lambda|^2},
\end{eqnarray*}
where we use the property of distance operations that for $p,q\in\mathbb{R}^N\backslash\{0\}$
\begin{equation*}
\left|\frac{p}{|p|^2}-\frac{q}{|q|^2}\right|=\frac{|p-q|}{|p||q|}.
\end{equation*}
Thus we obtain
\begin{equation*}
1+|x^\lambda|^2=1+\frac{|e_\lambda|^2|x-a|^2}{|x-e_\lambda|^2}=1+\frac{|e_\lambda|^2|x+e_\lambda/|e_\lambda|^2|^2}{|x-e_\lambda|^2}=\frac{(|e_\lambda|^2+1)(1+|x|^2)}{|x-e_\lambda|^2}=\left(\frac{1+\lambda^2}{2\lambda}\right)^2\frac{(1+|x|^2)}{|x-e_\lambda|^2},
\end{equation*}
which implies \eqref{1+xlambda_comparison}. Due to the definition of $\Sigma_\lambda$, for $x\in\Sigma_\lambda$, we have
\begin{equation*}
\frac{1+|x|^2}{1+|x^\lambda|^2}=\left(\frac{2\lambda}{1+\lambda^2}\right)^2|x-e_\lambda|^2>\left(\frac{2\lambda}{1+\lambda^2}\right)^2\left(\frac{1+\lambda^2}{2\lambda}\right)^2=1,
\end{equation*}
then we can find that $|x|>|x^\lambda|$.
\end{proof}

\begin{lem}\label{proof_of_operater_identity}
Assume that $v(x)\in C^{2}(B_1(0))$. Let $y=x^\lambda$ and $v(y)$ is a function with $y$ as the independent variable. We define $v_\lambda(x)=v(x^\lambda)$ as a new form of function with $x$ as the independent variable. Then  
\begin{equation}\label{detailed_operater_identity}
\begin{aligned}
\Delta_{(g,x)}v_\lambda(x) &= \left(\frac{1+|x|^2}{2}\right)^2\left(\Delta_{x}v_\lambda(x)-\frac{2(N-2)}{1+|x|^2}x\cdot\nabla_{x}v_\lambda(x)\right) \\
&= \left.\left(\frac{1+|y|^2}{2}\right)^2\left(\Delta_{y}v(y)-\frac{2(N-2)}{1+|y|^2}y\cdot\nabla_{y}v(y)\right)\right|_{y=x^\lambda}=\Delta_{(g,y)}v(y)|_{y=x^\lambda},
\end{aligned}
\end{equation}            
where $\Delta_x=\sum_{i=1}^{N}\partial^2/\partial x_i^2$,\quad$x\cdot\nabla_x=\sum_{i=1}^{N}x_i\partial/\partial x_i$,\,\, and $\Delta_{(g,x)}$\,\,\,defined\,\,as\, \eqref{operatordefinition}.
\end{lem}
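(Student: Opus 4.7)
The plan is to identify $\Delta_{(g,\cdot)}$ in \eqref{operatordefinition} as the Laplace--Beltrami operator $\Delta_g$ of the conformal metric $g=4|dx|^2/(1+|x|^2)^2$ on the ball, verify that the reflection $\Phi_\lambda:x\mapsto x^\lambda$ from \eqref{xlambda_definition} is a $g$-isometry of $B_1(0)$, and then invoke the intrinsic invariance of the Laplace--Beltrami operator under isometries: for any $C^2$ function $v$ and any isometry $\Phi$ of $(B_1(0),g)$, one has $\Delta_g(v\circ\Phi)=(\Delta_g v)\circ\Phi$. Applied to $v_\lambda=v\circ\Phi_\lambda$, this is exactly \eqref{detailed_operater_identity}.

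For the first ingredient, I would recall that for a conformal metric $\rho^2|dx|^2$ on a domain of $\mathbb{R}^N$ one has $\Delta_g u=\rho^{-N}\partial_i(\rho^{N-2}\partial_i u)$. Plugging in $\rho(x)=2/(1+|x|^2)$ and using the elementary identity $\partial_i\rho=-x_i\rho^2$, the expansion collapses to
\begin{equation*}
\Delta_g = \left(\frac{1+|x|^2}{2}\right)^2\Delta - \frac{(N-2)(1+|x|^2)}{2}\,x\cdot\nabla,
\end{equation*}
which coincides with \eqref{operatordefinition}. This is a short direct calculation.

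For the second ingredient, I would read \eqref{xlambda_definition} as the Euclidean spherical inversion centered at $e_\lambda$ with radius $R_\lambda:=(1+\lambda^2)/(2\lambda)$. A standard computation shows that its Jacobian is conformal with factor $R_\lambda^2/|x-e_\lambda|^2$, so $|d\Phi_\lambda(x)|^2=\bigl(R_\lambda^2/|x-e_\lambda|^2\bigr)^2|dx|^2$. Lemma \ref{proof_of_xlambda_distance} supplies $1+|x^\lambda|^2=R_\lambda^2(1+|x|^2)/|x-e_\lambda|^2$, so forming the pullback gives
\begin{equation*}
\Phi_\lambda^{*}g=\frac{4|d\Phi_\lambda(x)|^2}{(1+|\Phi_\lambda(x)|^2)^2}=\frac{4\cdot R_\lambda^4/|x-e_\lambda|^4\,|dx|^2}{R_\lambda^4(1+|x|^2)^2/|x-e_\lambda|^4}=\frac{4|dx|^2}{(1+|x|^2)^2}=g,
\end{equation*}
proving that $\Phi_\lambda$ is a $g$-isometry (it is also involutive, since $(x^\lambda)^\lambda=x$).

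The main bookkeeping obstacle is really just the second step: carefully tracking the Euclidean conformal factor of the spherical inversion and matching it against the ratio $(1+|x|^2)/(1+|x^\lambda|^2)$ delivered by Lemma \ref{proof_of_xlambda_distance}. A completely elementary alternative, which avoids invoking Riemannian formalism and is closer in spirit to the calculations already present in the paper, is to compute $\Delta_x v_\lambda(x)$ and $x\cdot\nabla_x v_\lambda(x)$ by the chain rule directly from \eqref{xlambda_definition}, and then to verify by algebraic manipulation --- again using the conformal factor $R_\lambda^2/|x-e_\lambda|^2$ and the ratio from Lemma \ref{proof_of_xlambda_distance} --- that all $x$-dependent coefficients reassemble into the $y$-expressions evaluated at $y=x^\lambda$. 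This brute-force route is longer, but it requires no appeal to the general invariance of $\Delta_g$ and therefore stays self-contained within the setup of Section \ref{sec:pre}.
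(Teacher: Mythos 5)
Your main route is correct, but it is genuinely different from the paper's. The paper proves Lemma~\ref{proof_of_operater_identity} by the brute-force chain-rule computation that you only sketch as a fallback: it introduces Kelvin-type auxiliary functions $u(y)=(1+|y|^2)^{-(N-2)/2}v(y)$ and $u_\lambda(x)=(1+|x|^2)^{-(N-2)/2}v_\lambda(x)$, re-expresses them in terms of $X=x-e_\lambda$, $Y=y-e_\lambda$ with $Y=R_\lambda^2X/|X|^2$ and $U(Y)=(R_\lambda^{-1})^{N-2}|X|^{N-2}U_\lambda(X)$, differentiates term by term to obtain the scaling identity $\Delta_Y U=(R_\lambda^{-1})^{N+2}|X|^{N+2}\Delta_X U_\lambda$, and finally translates back using Lemma~\ref{proof_of_xlambda_distance} and the relations \eqref{deltayu_definition}--\eqref{deltaxulambda_definition}. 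Your argument instead identifies \eqref{operatordefinition} as $\Delta_g$ for the conformal metric $g=\rho^2|dx|^2$, $\rho=2/(1+|x|^2)$, via the standard formula $\Delta_g u=\rho^{-N}\partial_i(\rho^{N-2}\partial_i u)$ (your coefficient bookkeeping checks out against \eqref{operatordefinition}), then shows $\Phi_\lambda$ is a $g$-isometry by combining the Euclidean conformal factor $R_\lambda^2/|x-e_\lambda|^2$ of the inversion with the ratio $1+|x^\lambda|^2=R_\lambda^2(1+|x|^2)/|x-e_\lambda|^2$ from Lemma~\ref{proof_of_xlambda_distance}, and finishes by invoking $\Delta_g(v\circ\Phi)=(\Delta_g v)\circ\Phi$ for an isometry $\Phi$. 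This is cleaner and is, conceptually, the reason the metric $g$ was introduced at all; it also explains the statement rather than merely verifying it. What the paper's route buys is self-containedness: it never needs to quote the general invariance of the Laplace--Beltrami operator, only elementary calculus, at the cost of a page of partial derivatives. Both proofs hinge on the same two facts (the inversion's conformal factor and Lemma~\ref{proof_of_xlambda_distance}), so they are different packagings of the same computation; yours makes the isometry structure explicit, the paper's keeps everything Euclidean.
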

\begin{proof}
To compare the values of $\Delta_{(g,x)}v_\lambda(x)$ and $\Delta_{(g,y)}v(y)$ through direct computation, we define $u(y)$ and $u_\lambda(x)$ as
\begin{eqnarray*}
u(y)&=&(1+|y|^2)^{-(N-2)/2}v(y),\\
u_\lambda(x)&=&(1+|x|^2)^{-(N-2)/2}v_\lambda(x).
\end{eqnarray*}
Then we can find that
\begin{eqnarray}
\label{deltayu_definition}\frac{1}{4}(1+|y|^2)^{(N+2)/2}\Delta_{y}u(y)&=&\left(\frac{1+|y|^2}{2}\right)^2\left(\Delta_{y}v-\frac{2(N-2)}{1+|y|^2}y\cdot\nabla_{y}v\right)-\frac{N(N-2)}{4}v,\\ \label{deltaxulambda_definition}
\qquad\quad\frac{1}{4}(1+|x|^2)^{(N+2)/2}\Delta_{x}u_\lambda(x)&=&\left(\frac{1+|x|^2}{2}\right)^2\left(\Delta_{x}v_\lambda-\frac{2(N-2)}{1+|x|^2}x\cdot\nabla_{x}v_\lambda\right)-\frac{N(N-2)}{4}v_\lambda,\\ \label{uy_ulambda}
u(y)&=&\left(\frac{1+|x|^2}{1+|y|^2}\right)^{(N-2)/2}u_\lambda(x).
\end{eqnarray}
For simplicity, we define 
\begin{equation}\label{XYU_definition}
X=x-e_\lambda,\quad Y=y-e_\lambda, \quad U_\lambda(X)=u_\lambda(x)\,\,\,and\,\,\,U(Y)=u(y).
\end{equation} 
By \eqref{xlambda_definition}, \eqref{1+xlambda_comparison} and \eqref{uy_ulambda}, it follows that
\begin{eqnarray}\label{Y_definition}
Y=\left(\frac{1+\lambda^2}{2\lambda}\right)^2\frac{X}{|X|^2},\quad X=\left(\frac{1+\lambda^2}{2\lambda}\right)^2\frac{Y}{|Y|^2}\\
U(Y,t)= \left(\frac{2\lambda}{1+\lambda^2}\right)^{N-2}|X|^{N-2}U_\lambda(X).
\end{eqnarray}
Then we can calculate that
\begin{eqnarray*}
\left.\frac{\partial X_j}{\partial Y_i}\right |_{i=j}&=&\left(\frac{1+\lambda^2}{2\lambda}\right)^{2}\frac{|Y|^2-2Y_i^2}{|Y|^4},\quad\left.\frac{\partial X_j}{\partial Y_i}\right|_{i\not=j}=\quad\left(\frac{1+\lambda^2}{2\lambda}\right)^{2}\frac{-2Y_i Y_j}{|Y|^4},\\
\frac{\partial U(Y)}{\partial Y_i}&=&\sum_{j=1}^{N}\frac{\partial U(Y)}{\partial X_j}\frac{\partial X_j}{\partial Y_i}\\&=&\left(\frac{2\lambda}{1+\lambda^2}\right)^{N-2}\sum_{j=1}^{N}\left(\frac{\partial|X|^{N-2}}{\partial X_j}U_\lambda(X)+\frac{\partial U_\lambda(X)}{\partial X_j}|X|^{N-2}\right)\frac{\partial X_j}{\partial Y_i},\\
\frac{\partial^2 U(Y)}{\partial Y_i^2}&=&\sum_{j=1}^{N}\left(\frac{\partial^2 U(Y)}{\partial X_j^2}\left(\frac{\partial X_j}{\partial Y_i}\right)^2+\frac{\partial U(Y)}{\partial X_j}\frac{\partial^2 X_j}{\partial Y_i^2}\right)+\sum_{1\leqslant p\not=q}^{N}\frac{\partial^2U(Y)}{\partial X_p\partial X_q}\frac{\partial X_p}{\partial Y_i}\frac{\partial X_q}{\partial Y_i}\\ \frac{\partial^2U(Y)}{\partial X_j^2}&=&\left(\frac{2\lambda}{1+\lambda^2}\right)^{N-2}\left(\frac{\partial^2|X|^{N-2}}{\partial X_j^2}U_\lambda(X)+2\frac{\partial |X|^{N-2}}{\partial X_1}\frac{\partial U_\lambda(X)}{\partial X_j}+|X|^{N-2}\frac{\partial^2U_\lambda(X)}{\partial X_j^2}\right)\\\frac{\partial^2U(Y)}{\partial X_pX_q}&=&\left(\frac{2\lambda}{1+\lambda^2}\right)^{N-2}\left(\frac{\partial^2|X|^{N-2}}{\partial X_p\partial X_q}U_\lambda(X)+|X|^{N-2}\frac{\partial U_\lambda(X)}{\partial X_p \partial X_q}\right)\\&\quad&+\left(\frac{2\lambda}{1+\lambda^2}\right)^{N-2}\left(\frac{\partial|X|^{N-2}}{\partial X_p}\frac{\partial U_\lambda(X)}{\partial X_q}+\frac{\partial|X|^{N-2}}{\partial X_q}\frac{\partial U_\lambda(X)}{\partial X_p}\right).
\end{eqnarray*}
Then we obtain
\begin{eqnarray*}            &\quad&\Delta_YU(Y)=\sum_{i=1}^{N}\frac{\partial^2 U(Y)}{\partial Y_i^2}\\&=&\left(\frac{2\lambda}{1+\lambda^2}\right)^{N-2}|X|^{N-2}\left(\sum_{j=1}^{N}\frac{\partial^2U_\lambda(X)}{\partial X_j^2}\sum_{i=1}^{N}\left(\frac{\partial X_j}{\partial Y_i}\right)^2+\sum_{1\leqslant p\not=q}^{N}\frac{\partial^2U_\lambda(X)}{\partial X_p \partial X_q}\sum_{i=1}^{N}\frac{\partial X_p}{\partial Y_i}\frac{\partial X_q}{\partial Y_i}\right)\\&\quad&+\left(\frac{2\lambda}{1+\lambda^2}\right)^{N-2}\sum_{j=1}^{N}\frac{\partial U_\lambda(X)}{\partial X_j}\left(2\frac{\partial |X|^{N-2}}{\partial X_j}\sum_{i=1}^{N}\left(\frac{\partial X_j}{\partial Y_i}\right)^2+|X|^{N-2}\sum_{i=1}^{N}\frac{\partial^2 X_j}{\partial Y_i^2}\right)\\&\quad&+\left(\frac{2\lambda}{1+\lambda^2}\right)^{N-2}\sum_{j=1}^{N}\frac{\partial U_\lambda(X)}{\partial X_j}\left(\sum_{i\leqslant m\not= j}^{N}\frac{\partial|X|^{N-2}}{\partial X_m}\sum_{i=1}^{N}\frac{\partial X_j}{\partial Y_i}\frac{\partial X_m}{\partial Y_i}\right)\\&\quad&+\left(\frac{2\lambda}{1+\lambda^2}\right)^{N-2}U_\lambda(X)\left(\sum_{j=1}^{N}\frac{\partial^2|X|^{N-2}}{\partial X_j^2}\sum_{i=1}^{N}\left(\frac{\partial X_j}{\partial Y_i}\right)^2+\sum_{1\leqslant p\not=q}^{N}\frac{\partial^2|X|^{N-2}}{\partial X_p \partial X_q}\sum_{i=1}^{N}\frac{\partial X_p}{\partial Y_i}\frac{\partial X_q}{\partial Y_i}\right)\\&\quad&+\left(\frac{2\lambda}{1+\lambda^2}\right)^{N-2}U_\lambda(X)\left(\sum_{j=1}^{N}\frac{\partial|X|^{N-2}}{\partial X_j}\sum_{i=1}^{N}\frac{\partial^2 X_j}{\partial Y_i^2}\right).
\end{eqnarray*}
From \eqref{Y_definition} we also have
\begin{equation*}
\begin{aligned}
&\sum_{i=1}^{N}\left(\frac{\partial X_j}{\partial Y_i}\right)^2=\left(\frac{1+\lambda^2}{2\lambda}\right)^{4}\frac{1}{|Y|^4}=\left(\frac{1+\lambda^2}{2\lambda}\right)^{4}\left(\frac{2\lambda}{1+\lambda^2}\right)^{8}|X|^4=\left(\frac{2\lambda}{1+\lambda^2}\right)^{4}|X|^4,\quad \sum_{i=1}^{N}\frac{\partial X_p}{\partial Y_i}\frac{\partial X_q}{\partial Y_i}=0,\\&2\frac{\partial |X|^{N-2}}{\partial X_j}\sum_{i=1}^{N}\left(\frac{\partial X_j}{\partial Y_i}\right)^2+|X|^{N-2}\sum_{i=1}^{N}\frac{\partial^2 X_j}{\partial Y_i^2}=0,\,\sum_{j=1}^{N}\frac{\partial^2|X|^{N-2}}{\partial X_j^2}\sum_{i=1}^{N}\left(\frac{\partial X_j}{\partial Y_i}\right)^2+\sum_{j=1}^{N}\frac{\partial|X|^{N-2}}{\partial X_j}\sum_{i=1}^{N}\frac{\partial^2 X_j}{\partial Y_i^2}=0.
\end{aligned}
\end{equation*}
Through the above calculation process, we finally get that
\begin{equation}\label{U_equation}
\Delta_Y U(Y)=\left(\frac{2\lambda}{1+\lambda^2}\right)^{N-2}|X|^{N-2}\left(\sum_{j=1}^{N}\frac{\partial^2U_\lambda(X)}{\partial X_j^2}\sum_{i=1}^{N}\left(\frac{\partial X_j}{\partial Y_i}\right)^2\right)=\left(\frac{2\lambda}{1+\lambda^2}\right)^{N+2}|X|^{N+2}\Delta_X U_\lambda(X).
\end{equation}
By \eqref{XYU_definition}, for each $i$, we have
\begin{eqnarray*}
\frac{\partial U_\lambda(X)}{\partial X_i}=\frac{\partial u_\lambda(x)}{\partial x_i}\frac{\partial x_i}{\partial X_i}=\frac{\partial u_\lambda(x)}{\partial x_i},&\,&\frac{\partial^2U_\lambda(X)}{\partial X_i^2}=\frac{\partial\left(\frac{\partial U_\lambda(X)}{\partial X_i}\right)}{\partial X_i}=\frac{\partial\left(\frac{\partial u_\lambda(x)}{\partial x_i}\right)}{\partial x_i}\frac{\partial x_i}{\partial X_i}=\frac{\partial^2 u_\lambda(x)}{\partial x_i^2}\\\frac{\partial U(Y)}{\partial Y_i}=\frac{\partial u(y)}{\partial y_i}\frac{\partial y_i}{\partial Y_i}=\frac{\partial u(y)}{\partial y_i} ,&\,&\frac{\partial^2U(U)}{\partial Y_i^2}=\frac{\partial\left(\frac{\partial U(Y)}{\partial Y_i}\right)}{\partial Y_i}=\frac{\partial\left(\frac{\partial u(y)}{\partial y_i}\right)}{\partial y_i}\frac{\partial y_i}{\partial Y_i}=\frac{\partial^2 u(y)}{\partial y_i^2}.
\end{eqnarray*}
Then \eqref{U_equation} implies 
\begin{equation*}
\Delta_y u=\left(\frac{2\lambda}{1+\lambda^2}\right)^{N+2}|x-e_\lambda|^{N+2}\Delta_x u_\lambda.
\end{equation*}
By \eqref{proof_of_xlambda_distance}, we have 
\begin{equation*}
(1+|y|^2)^{\frac{N+2}{2}}\Delta_yu=(1+|x|^2)^{\frac{N+2}{2}}\Delta_xu_\lambda.
\end{equation*}
From \eqref{deltayu_definition} and \eqref{deltaxulambda_definition}, we can conclude that \eqref{detailed_operater_identity} holds.
\end{proof}

 For each fixed $\tau>0$ and $0<\alpha<1$, we define the parabolic cylinder $Q_{\tau}:=B_1(0)\times[\tau-\varepsilon_0, \tau+\varepsilon_0]$, where $\varepsilon_0$ is a small constant. For any points $(x,t),(\Tilde{x},\Tilde{t})\in Q_\tau$, since $u(x,t)\in C^{2,1}(Q_\tau)$, there exist two constants $\xi_t\in(\min\{t,\Tilde{t}\},\max\{t,\Tilde{t}\})$, $\xi_x\in(0,1)$, such that
\begin{eqnarray*}
\frac{\left|u(x,t)-u\left(\Tilde{x},\Tilde{t}\right)\right|}{\left|x-\Tilde{x}\right|^{\alpha}+\left|t-\Tilde{t}\right|^{\frac{\alpha}{2}}}&=&\frac{\left|u(x,t)-u(x,\Tilde{t})+u(x,\Tilde{t})-u(\Tilde{x},\Tilde{t})\right|}{\left|x-\Tilde{x}\right|^{\alpha}+|t-\Tilde{t}|^{\frac{\alpha}{2}}}\\&\leqslant&\frac{\left|u(x,t)-u(x,\Tilde{t})\right|+|u(x,\Tilde{t})-u(\Tilde{x},\Tilde{t})|}{|x-\Tilde{x}|^{\alpha}+|t-\Tilde{t}|^{\frac{\alpha}{2}}}\\&<&\left|u_t(x,\xi_t)\right|\left|t-\Tilde{t}\right|^{1-\frac{\alpha}{2}}+\left|\nabla u(x+\xi_x(\Tilde{x}-x),t)\right|\left|x-\Tilde{x}\right|^{1-\alpha}.
\end{eqnarray*}
Again by $u(x,t)\in C^{2,1}(Q_\tau)$ and the boundedness of $Q_\tau$, we have $\left|u_t(x,\xi_t)\right|$, $\left|\nabla u(x+\xi_x(\Tilde{x}-x),t)\right|$, $\left|t-\Tilde{t}\right|^{1-\frac{\alpha}{2}}$ and $\left|x-\Tilde{x}\right|^{1-\alpha}$ are all bounded. Then for some $0<\alpha<1$, there exists a constant $C_0>0$ such that 
\begin{equation}\label{up-uq}
u(x,t)-u\left(\Tilde{x},\Tilde{t}\right)\leqslant C_0\left(\left|x-\Tilde{x}\right|^{\alpha}+\left|t-\Tilde{t}\right|^{\frac{\alpha}{2}}\right).
\end{equation}
Combining \eqref{up-uq} with the boundedness of $u(x,t)$ in $Q_\tau$, for any $\tau>0$, the orbit $\{u(\cdot,t),\,t\in[\tau-\varepsilon_0,\tau+\varepsilon_0]\}$ is relatively compact in $C_0(B_1(0))$. 
To directly address the properties of the functions in $\omega(u)$, we present the following lemma.
\begin{lem}\label{Convergence_of_sequences}
Let $M:=sup\{\Arrowvert u(\cdot,t)\Arrowvert_{L^{\infty}}:t>0\}$, under the definition \eqref{uk_definition} and \eqref{fk_definition}, for each $k$, $u_k$ and $f_k$ satisfy the problem \eqref{uk_problem}. 

Then there exist some functions $u_\infty\in C^{2,1}(B_1(0)\times[1-\varepsilon_0,1+\varepsilon_0])$ and $f_\infty\in C((0,1)\times[-M,M]\times[1-\varepsilon_0,1+\varepsilon_0])$ such that $u_k\to u_\infty$ in the sense of $C^{2,1}(B_1(0)\times[1-\varepsilon_0,1+\varepsilon_0])$ and $f_k\to f_\infty$ as $k\to\infty$ in the sense of $C((0,1)\times[-M,M]\times[1-\varepsilon_0,1+\varepsilon_0])$ . And with a constant $\varepsilon_0$, $u_\infty(x,t)$ satisfies 
\begin{equation}\label{uinfty_problem}
\left\{\begin{array}{ll}
\frac{\partial u_{\infty}}{\partial t}-\Delta u_\infty=f_\infty(|x|,u_\infty(x,t),t), &(x,t)\in B_1(0)\times[1-\varepsilon_0,1+\varepsilon_0],  \\
u_\infty=0 & (x,t)\in \partial B_1(0)\times[1-\varepsilon_0,1+\varepsilon_0].
\end{array}\right .
\end{equation}
\end{lem}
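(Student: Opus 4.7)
The plan is to combine Arzelà--Ascoli applied to the shifted nonlinearities with interior-and-boundary parabolic Schauder estimates for the shifted solutions, and then to pass to the limit termwise in the equation. First I would extract $f_\infty$. For $t\in[1-\varepsilon_0,1+\varepsilon_0]$ one has $t+t_k-1\in[t_k-\varepsilon_0,t_k+\varepsilon_0]$, so applying hypothesis (F2) at $\tau=t_k$ (the constants $H$ and $\varepsilon_0$ being \emph{independent of $\tau$}) gives a Hölder bound on $f_k$ in $(r,t)$ that is uniform in $k$; combined with the uniform Lipschitz continuity in $u$ from (F1), this makes $\{f_k\}$ uniformly equicontinuous on every compact subset $[\rho,1-\rho]\times[-M,M]\times[1-\varepsilon_0,1+\varepsilon_0]$. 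Uniform boundedness on such sets follows from $|f_k(r,u,t)|\leqslant|f_k(r,0,t)|+L|u|$ together with the a priori bound on $\partial_t u-\Delta u$ along the bounded solution. A diagonal extraction over $\rho\downarrow 0$ then yields a subsequence with $f_k\to f_\infty$ locally uniformly, and $f_\infty$ inherits (F1), (F2).

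Next I would address the regularity of $u_k$. The sequences $\{u_k\}$, $\{\partial_t u_k\}$, $\{\nabla u_k\}$ are uniformly bounded on $Q_1$ by hypothesis. Because $u_k$ is therefore uniformly Lipschitz in $(x,t)$, composition with the $(r,t)$-Hölder bound on $f_k$ and the $u$-Lipschitz bound shows that the right-hand side $(x,t)\mapsto f_k(|x|,u_k(x,t),t)$ is uniformly Hölder-continuous in $(x,t)$. Interior and boundary parabolic Schauder estimates (see e.g.\ Lieberman, \emph{Second order parabolic differential equations}, Ch.~IV), together with the smoothness of $\partial B_1(0)$ and the zero Dirichlet data, then give a uniform bound on $\|u_k\|_{C^{2+\alpha,1+\alpha/2}(\overline{B_1(0)}\times[1-\varepsilon_0,1+\varepsilon_0])}$. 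Arzelà--Ascoli supplies a further subsequence along which $u_k\to u_\infty$ in $C^{2,1}(B_1(0)\times[1-\varepsilon_0,1+\varepsilon_0])$.

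Finally I would pass to the limit. The terms $\partial_t u_k$, $\Delta u_k$, $u_k$ converge to the corresponding expressions in $u_\infty$ in $C$; meanwhile $f_k(|x|,u_k,t)\to f_\infty(|x|,u_\infty,t)$ uniformly on compacts, since $f_k\to f_\infty$ locally uniformly, the $u$-Lipschitz constant is uniform, and $u_k\to u_\infty$ uniformly. The Dirichlet boundary condition is preserved since $u_k\equiv 0$ on $\partial B_1(0)\times[1-\varepsilon_0,1+\varepsilon_0]$. This gives \eqref{uinfty_problem}.

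The main obstacle is justifying the uniform-in-$k$ Schauder bound up to the lateral boundary. The delicate point is that it uses not just (F1)--(F2), but also the fact that the composed right-hand side $f_k(|x|,u_k(x,t),t)$ has a uniform-in-$k$ Hölder modulus, which in turn relies on the a priori bounds on $\nabla u_k$ and $\partial_t u_k$ hypothesized for $u$. Everything else is a standard compactness-plus-limit argument; the uniformity of the constants $H$ and $\varepsilon_0$ in (F2) with respect to $\tau$ is exactly what makes the shifts $t\mapsto t+t_k-1$ harmless.
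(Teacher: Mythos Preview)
Your proposal is correct and follows essentially the same route as the paper: both arguments extract $f_\infty$ via Arzel\`a--Ascoli from the equicontinuity supplied by (F1)--(F2), then use the a~priori bounds on $\nabla u$ and $\partial_t u$ to show the composed right-hand side $f_k(|x|,u_k,t)$ is uniformly H\"older, invoke parabolic Schauder estimates (the paper cites Friedman's Theorem~3.3.7, you cite Lieberman) for a uniform $C^{2+\alpha,1+\alpha/2}$ bound, and pass to the limit termwise. The only cosmetic differences are that the paper spells out, via the fundamental theorem of calculus, why the limits of the derivatives are the derivatives of the limit, and works directly on $Q_u$ rather than doing your diagonal extraction over $\rho\downarrow 0$.
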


\begin{proof}
For each $K$ and any $(x,t),(u,\Tilde{u}),(\Tilde{x},\Tilde{t})$ in the bounded domain $Q_u:=(0,1)\times[-M,M]\times[1-\varepsilon_0, 1+\varepsilon_0]$, by \eqref{fp-fq} and the definition \eqref{fk_definition}, we set $C'$ be the maximum of $C_u$,$C_x$ and $C_t$ appeared in \eqref{fp-fq}, with any $\varepsilon>0$, if $\left||x|-|\Tilde{x}|\right|^{\alpha}+\left|u-\Tilde{u}\right|+\left|t-\Tilde{t}\right|^{\frac{\alpha}{2}}<\delta(\varepsilon)=\varepsilon\backslash C'$, then we have 
\begin{eqnarray*}
|f_k(|x|,u(x,t),t)-f_k(|\Tilde{x}|,u(\Tilde{x},\Tilde{t}),\Tilde{t})|\leqslant C'(|x-\Tilde{x}|^\alpha+|u-\Tilde{u}|+|t-\Tilde{t}|^{\frac{\alpha}{2}})<C'\cdot\frac{\varepsilon}{C'}=\varepsilon,
\end{eqnarray*}
which means the sequence $\left\{f_k\right\}_{k\in\mathbb{N}}$ is equicontinuous in $Q_u$. In addition, owing to the boundedness of $Q_u$ and the continuity of $f$ in $Q_u$, we deduce that $\left\{f_k\right\}_{k\in\mathbb{N}}$ is bounded in $Q_u$ for each $k$. Then the Ascoli-Azela theorem implies that there exists a function $f_\infty\in C(Q_u)$ such that $f_k\to f_\infty$ in the sense of $C(Q_u)$ as $k\to\infty$, and $f_\infty$ is Lipschitz continuous in $u$ by (F1).

Next we discuss the convergence of $\{u_k\}_{k\in\mathbb{N}}$. Set $Q_1:=B_1(0)\times[1-\varepsilon_0,1+\varepsilon_0]$, from \eqref{uk_problem}, for each $k$, let $\Tilde{f}_k(x,t)=f_k(|x|,u_k(x,t),t)$,
then we have
\begin{equation}\label{uk_equation}
\frac{\partial u_k(x,t)}{\partial t}-\Delta u_k(x,t)=\Tilde{f}_k(x,t),\quad(x,t)\in Q_1.
\end{equation}
Additionally, for any $(x,t),(\Tilde{x},\Tilde{t})\in Q_1$, by the condition (F1) and (F2), we have three constants $C_x$, $C_u$ and $C_t$ such that
\begin{equation}\label{fp-fq}
\begin{aligned}
&\quad|\Tilde{f_k}(x,t)-\Tilde{f_k}(\Tilde{x},\Tilde{t})=||f_k(|x|,u_k(x,t),t)-f_k(|\Tilde{x}|,u_k(\Tilde{x},\Tilde{t}),\Tilde{t})|\\&\leqslant|f(|x|,u_k(x,t),t)-f(|\Tilde{x}|,u_k(x,t),t)|+|f(|\Tilde{x}|,u_k(x,t),t)-f(|\Tilde{x}|,u_k(\Tilde{x},\Tilde{t}),t)|\\&\quad+|f(|\Tilde{x}|,u_k(\Tilde{x},\Tilde{t}),t)-f(|\Tilde{x}|,u_k(\Tilde{x},\Tilde{t}),\Tilde{t})|\\
&\leqslant C_x|x-\Tilde{x}|^\alpha+ C_{u}|u_k(x,t)-u_k(\Tilde{x},\Tilde{t})|+C_t|t-\Tilde{t}|^{\frac{\alpha}{2}}.
\end{aligned}
\end{equation}
By \eqref{up-uq}, we set $C=\max\{C_x, C_uC_0,C_t\}$, then for any $(x,t),(\Tilde{x},\Tilde{t})\in Q_1$, we have
\begin{equation}\label{fCalpha}
|\Tilde{f_k}(x,t)-\Tilde{f_k}(\Tilde{x},\Tilde{t})|\leqslant C\left(\left|x-\Tilde{x}\right|^{\alpha}+\left|t-\Tilde{t}\right|^{\frac{\alpha}{2}}\right).
\end{equation}
which means for any two points $P(x,t),\Tilde{P}(\Tilde{x},\Tilde{t})$, we define $d(P,\Tilde{P})=(|x-\Tilde{x}|+|t-\Tilde{t}|^{\frac{1}{2}})$, then we can get that 
\begin{equation}
\mathop{sup}_{\substack{P,\Tilde{P}\in Q_1\\P\not=\Tilde{P}}}\frac{|\Tilde{f}_k(P)-\Tilde{f}_k(\Tilde{P})|}{d^{\alpha}(P,\Tilde{P})}<+\infty.
\end{equation}
Given that $u_k$ is the solution of \eqref{uk_problem}, then the existence and uniqueness theorem of classical solutions for heat equations(Theorem 3.3.7 in \cite{friedman-2008}) implies that 
\begin{equation}\label{D2uk_equicontinuous}
\begin{aligned}
&\,\quad|u_k|_{2+\alpha,1+\frac{\alpha}{2};Q_1}\\&:=\mathop{sup}_{\substack{P\in Q_1}}|u_k(P)|+\mathop{sup}_{\substack{P\in Q_1}}|Du_k(P)|+\mathop{sup}_{\substack{P\in Q_1}}|D^2u_k(P)|+\mathop{sup}_{\substack{P\in Q_1}}\left|\frac{\partial u_k}{\partial t}(P)\right|+\mathop{sup}_{\substack{P,\Tilde{P}\in Q_1\\P\not=\Tilde{P}}}\frac{|u_k(P)-u_k(\Tilde{P})|}{d^{\alpha}(P,\Tilde{P})}\\&\quad+\mathop{sup}_{\substack{P,\Tilde{P}\in Q_1\\P\not=\Tilde{P}}}\frac{|Du_k(P)-Du_k(\Tilde{P})|}{d^{\alpha}(P,\Tilde{P})}+\mathop{sup}_{\substack{P,\Tilde{P}\in Q_1\\P\not=\Tilde{P}}}\frac{|D^2u_k(P)-D^2u_k(\Tilde{P})|}{d^{\alpha}(P,\Tilde{P})}+\mathop{sup}_{\substack{P,\Tilde{P}\in Q_1\\P\not=\Tilde{P}}}\frac{\left|\frac{\partial u_k}{\partial t}(P)-\frac{\partial u_k}{\partial t}(\Tilde{P})\right|}{d^{\alpha}(P,\Tilde{P})}\\&\leqslant C<+\infty,
\end{aligned}
\end{equation}
where $C$ is a constant independent of $k$, $Du_k=\left(\partial u_k/\partial x_1,\partial u_k/\partial x_2,\cdots,\partial u_k/\partial x_N\right)$ and $D^2u_k$ is the Hessian matrix of $u_k$. On the one hand \eqref{D2uk_equicontinuous} shows the equicontinuity of $u_k$, $Du_k$, $D^2u_k$ and $\partial u_k/\partial t$ in $C(Q_1)$, which means for a given $\varepsilon>0$, we can choose $\delta>0$ independent of $k$ such that 
\begin{equation*}
    |u_k(P)-u_k(\Tilde{P})|+\left|\frac{\partial u_k}{\partial t}(P)-\frac{\partial u_k}{\partial t}(\Tilde{P})\right|+|Du_k(P)-Du_k(\Tilde{P})|+|D^2u_k(P)-D^2u_k(\Tilde{P})|\leqslant C\delta^\alpha<\varepsilon,
\end{equation*} for all $P,\Tilde{P}\in Q_1$ and $k$. On the other hand \eqref{D2uk_equicontinuous} also ensures that for each $k$, $u_k$, $Du_k$, $D^2u_k$ and $\partial u_k/\partial t$ are all bounded in $Q_1$. By the Ascoli-Azela theorem, there exist functions $u_\infty$, $u_{t\infty}$,  $\left\{u_{i\infty}\right\}_{i=1}^N$, $\left\{u_{ij\infty}\right\}_{i,j=1}^N\in C(Q_1)$ such that as $k\to\infty$ for each $i$ and $j$ we have
\begin{equation*}
    u_k\to u_\infty,\quad\frac{\partial u_k}{\partial t}\to u_{t\infty},\quad \frac{\partial u_k}{\partial x_i}\to u_{i\infty},\quad \frac{\partial u_k}{\partial x_i\partial x_j}\to u_{ij\infty}
\end{equation*}
which are all in the sense of $C(Q_1)$. Employing the fundamental theorem of caculus and \eqref{D2uk_equicontinuous}, for $\varepsilon>0$, $(x,t)\in Q_1$ and fixed $i$, we can choose $\overline{\delta}=\min(\delta,1)>0$ independent of $k$ such that $B_{\overline{\delta}}(x)\times[1-\varepsilon_0,1+\varepsilon_0]\in Q_1$ and for all $|h|<\overline{\delta}$ and $k$ we have
\begin{equation*}
\begin{aligned}
&\quad \left|\frac{\partial u_k}{\partial x_i}(x_1,\cdots,x_j+h,\cdots,x_N,t)-\frac{\partial u_k}{\partial x_i}(x_1,\cdots,x_j,\cdots,x_N,t)-\frac{\partial^2u_k}{\partial x_i\partial x_j}(x,t)h\right|\\&=\left|\int^1_0 \frac{\partial^2u_k}{\partial x_i\partial x_j}(x_1,\cdots,x_j+sh,\cdots,x_N,t)h ds-\frac{\partial^2u_k}{\partial x_i\partial x_j}(x,t)h\right|\\&\leqslant\int_0^1\left|\frac{\partial^2u_k}{\partial x_i\partial x_j}(x_1,\cdots,x_j+sh,\cdots,x_N,t)-\frac{\partial^2u_k}{\partial x_i\partial x_j}(x,t)\right||h|ds\leqslant|h|^{1+\alpha}C<C\delta^\alpha|h|<\varepsilon|h|.
\end{aligned}    
\end{equation*}
Similarly we can get that
\begin{align*}
&\left|u_k(x_1,\cdots,x_i+h,\cdots,x_N,t)-u_k(x,t)-
\frac{\partial u_k}{\partial x_i}(x,t) h\right|\leqslant\mathop{sup}_{Q_1}\left|D^2 u_k\right||h|^{1+\alpha}\leqslant C|h|^{1+\alpha}<\varepsilon |h|,\\ &\quad\left|u_k(x,t+h)-u_k(x,t)-\frac{\partial u_k(x,t)}{\partial t} h\right|\leqslant\int_0^1\left|\frac{\partial u_k}{\partial t}(x,t+sh)-\frac{\partial u_k}{\partial t}(x,t)\right||h|ds<\varepsilon|h|.
\end{align*}
Letting $k\to\infty$, we can derive that
\begin{equation}\label{partialu_limit}
    \begin{aligned}
        &|u_{i\infty}(x_1,\cdots,x_j+h_j,\cdots,x_N,t)-u_{i\infty}(x_1,\cdots,x_j,\cdots,x_N,t)-u_{ij\infty}(x,t)h|<\varepsilon |h|,\\ &|u_\infty(x_1,\cdots,x_i+h_i,\cdots,x_N,t)-u_\infty(x_1,\cdots,x_i,\cdots,x_N,t)-u_{i\infty}(x,t)h|<\varepsilon |h|\\&\qquad\qquad\left|u_\infty(x,t+h)-u_\infty(x,t)-u_{t\infty}h\right|<\varepsilon |h|.
    \end{aligned}
\end{equation}
Then \eqref{partialu_limit} means that for each $i$ and $j$
\begin{equation}\label{uinC21}
u_{ij\infty}=\frac{\partial^2u_\infty}{\partial x_i\partial x_j},\quad u_{i\infty}=\frac{\partial u_\infty}{\partial x_i},\quad u_{t\infty}=\frac{\partial u_\infty}{\partial t},
\end{equation}
as $h\to 0$. The above discussion shows  that $u_\infty\in C^{2,1}(Q_1)$ and as $k\to\infty$ we observe that $u_k \to u_\infty$ in the sense of $C^{2,1}(Q_1)$ and 
\begin{equation}\label{ukpartials_limitderivation}
\frac{\partial u_k}{\partial t}\to \frac{\partial u_\infty}{\partial t},\quad Du_k\to Du_\infty,\quad D^2u_k\to D^2u_\infty,
\end{equation}
all in the sense of $C(Q_1)$. By \eqref{ukpartials_limitderivation} and the existence of $u_\infty$ and $f_\infty$, consider the problem \eqref{uk_problem} as $k\to\infty$, we can deduce that $u_\infty$ satisfies
\begin{equation*}
\left\{
\begin{aligned}
&\frac{\partial u_\infty}{\partial t}(x,t)-\Delta u_\infty(x,t)=f_\infty(|x|,u_\infty(x,t),t), \quad&(x,t)\in B_1(0)\times[1-\varepsilon_0,1+\varepsilon_0], \\
& u_\infty(x,t)=0, \quad & (x,t)\in \partial B_1(0)\times[1-\varepsilon_0,1+\varepsilon_0],
\end{aligned}\right.
\end{equation*}
Then we complete the proof of Lemma \ref{Convergence_of_sequences}.
\end{proof}

\textbf{Disclosure statement}

The author reports there are no competing interests to declare.

\end{document}